\newtheorem{definition}{\bf Definition}[section]
\newtheorem{lem}[definition]{\bf Lemma}
\newtheorem{thm}[definition]{\bf Theorem}
\newtheorem{prop}[definition]{\bf Proposition}
\newtheorem{cor}[definition]{\bf Corollary}
\newtheorem{rem}[definition]{\bf Remark}
\def\correspondingauthor{\footnote{*Corresponding author.}}
\begin{document}

\begin{frontmatter}


\title{Classical theorems through convergence methods.}


\author{Fernando Le\'on-Saavedra}
\address{ Department of Mathematics
University of C\'adiz Avda. de la Universidad s/n 
la Frontera. Spain.} \ead{fernando.leon@uca.es}

\author{M. del Carmen List\'an-Garc\'{\i}a}
\address{Department of Mathematics
University of C\'adiz, CASEM, Pol. R\'{\i}o San Pedro s/n, 11510-Puerto Real. Spain.} \ead{mariadelcarmen.listan@uca.es}

\author{Mar\'{\i}a Pilar Romero de la Rosa\correspondingauthor{*}}

\address{ Department of Mathematics
University of C\'adiz Avda. de la Universidad s/n 11403-Jerez de
la Frontera. Spain.}
\ead{pilar.romero@uca.es}

\begin{abstract}
We survey some results that provide different versions of classical results through different summability methods. Specifically, in order to adapt such classical results, we analyze which properties should satisfy the summability methods.
Sometimes very sharp conditions are obtained, giving a focused view of the subject and from which new problems emerge.
\end{abstract}

\begin{keyword}
Summability method, Orlicz-Pettis, Schur lemma, Korovkin-type results, Banach lattice, inequality preserving, order-inequalities preserving. \\ MSC \sep40H05  \sep   	40A35  \end{keyword}

\end{frontmatter}

\section{Introduction}

A good source of problems in Summability Theory consist in considering a classical result, in which the usual convergence is involved, and trying and obtaining new results using the different convergence methods. A step further consists in characterizing those summability methods for which such classical results remain true.

In this note, we follow quite closely some aspects of our research project. This means that this work is by no means an update or comprehensive report on summability methods (a very active field of research with many contributors). We focus instead on some new questions which motivated us on this subject \cite{nuevo,korovkin, orlicz,schur}.

These notes should be seen as an invitation to further reading and a preparation for questions which remain open.
What we tried to do is to present some results, to describe some of the tools and to avoid the technicalities each time it was possible. 

The study of convergence (or summability) methods arises with the development of Fourier Analysis. 
Since then, convergence methods have ge\-ne\-ra\-ted so much interest in Approximation Theory and Applied Mathematics that different monographs have appeared in the literature \cite{mursaleenbook, swartzmath, zygmund2}; moreover, this is a very active field of research.

As usual the symbol $\mathbb{N}$ will denote the set of natural numbers. Let $X$ be a  Banach space.
A linear summability method in $X$ (or convergence method) will be denoted by $\mathcal{R}$; that is, ${\mathcal R}$ will be a linear map ${\mathcal R} : \mathcal{D}_{\mathcal R} \subset X^{\mathbb{N}}\longrightarrow X $ (here ${\mathcal D}_{{\mathcal R}}$ denotes the domain of ${\mathcal R}$). Thus, a sequence $(x_n)\in X^{\mathbb{N}}$ is said to be ${\mathcal R}$-convergent to $L$ (and it will be denoted by $x_n\overset{\mathcal{R}}{\longrightarrow} L$ ) provided ${\mathcal R}((x_n)_{n\geq 1})=L$.
Usually, to avoid bizarre situations, we will require on $\mathcal{R}$ that the limit assignment does not depend on the first terms, that is, for any  $(x_n)_{n\geq 1}\in \mathcal{D}_{\mathcal{R}}$ such that $\mathcal{R}((x_n)_{n\geq 1})=L$ and for any $n_0\in \mathbb{N}$, we have that $(x_n)_{n\geq n_0}\in \mathcal{D}_{\mathcal{R}}$ and 
$\mathcal{R}((x_n)_{n\geq n_0})=L$.

The paper is structured as follows.  In section \ref{seccion2} we will recall the classical Orlicz-Pettis Theorem and we will establish an Orlicz-Pettis Theorem for any regular summability method. We will also see that the condition on regularity cannot be dropped.

One of the first versions of Korovkin approximation result through convergence methods is due to Gadjiev and Orhan  \cite{orhan} who obtained a remarkable  version  in terms of the statistical convergence.  Since then, this research line has been extended  for different types of convergence methods (\cite{korov, b3,EMN,MA, MVEG}). In Section \ref{seccion3} 
 we analyze those summability methods for which Korovkin’s statements continue being true.  Specifically we will show  two properties for summability methods that provide a Korovkin-type approximation result, namely, when the summability method {\it preserves inequalities} or when it {\it preserves order-inequalities}. 
Both concepts fit as a glove to different summability methods and it allows us to obtain several applications.
These  properties  are reminiscent of the squeeze theorem for sequences, as far as we know they have not been  studied for general summability methods and they deserve subsequent studies.

Section \ref{seccion4} deals with the Schur lemma, another classical theorem which was obtained by  Swartz \cite{swartzmath}  for bounded multiplier convergent series. 
We aim to unify different versions of Swartz's result which was obtained incorporating summability methods (\cite{aizpuruarmariopachecoperez,aizpurujavier,aizpurupachecoeslava}, etc). For instance, Schur type results were obtained incorporating any regular matrix summability method (\cite{aizpurupachecoeslava}) or the Banach-Lorentz convergence (\cite{aizpuruarmariopachecoperez}).  The paper concludes with a brief section with concluding remarks and open questions.

\section{Orlicz-Pettis type theorems}
\label{seccion2}
The Orlicz-Pettis Theorem is a classic result concerning  convergent  series, so beautiful that has attracted the interest of many mathematicians and it has been strengthened and generalized in many directions. An early survey is Kalton's paper \cite{kalton}. The reader can see in \cite{a1,a2,a3,a4,ornuevo,a5,a6,a7},  recent results about the Orlicz-Pettis type Theorems.

Let us recall that a series $\sum_i x_i$ in a Banach space $X$ is said to be unconditionally convergent (u.c) if for each permutation of the natural numbers $\pi \,:\mathbb{N}\to \mathbb{N}$ we have that $\sum_i x_{\pi(i)}$ is convergent.
A series $\sum_i x_i$ is subseries convergent if for any $M\subset \mathbb{N}$ 
there exists $x_M\in X$ such that the partial sums $S_n^M=\sum_{i=1}^n\chi_M(i)x_i$ converge  to $x_M$ (here $\chi_M(\cdot)$ denotes the characteristic function on $M$).  Both notions, unconditionally convergence and  subseries convergence, are equivalent in Banach spaces.

On the other hand, a series $\sum_i x_i$ is weak-subseries convergent if for any $M\subset \mathbb{N}$ there exists $x_M\in X$ such that the partial sums $S_n^M=\sum_{i=1}^n\chi_M(i)x_i$ converge weakly to $x_M$.  The classical Orlicz-Pettis Theorem states that a series $\sum_ix_i$  is unconditionally convergent if and only if $\sum_{i} x_i$ is weakly subseries convergent.

A first approach to contextualize the Orlicz-Pettis Theorem for any method of general summability, is to introduce the concept {\it summability method induced by a  summability method in $\mathbb{R}$}. Basically, what we do is to use projections to define a  weak summability method. Indeed, let $\rho$ be a linear summability method in $\mathbb{R}$, that is, a subset $D_{\rho}\subset \mathbb{R}^{\mathbb{N}}$ and a linear function $\rho\,: D_{\rho}\to \mathbb{R}$, which assigns a unique real number $\rho((x_n))$ to a sequence $(x_n)\in D_\rho$. 
A summability method $\rho$ is said to be {\it regular}, if for every convergent sequence $\lim_n x_n=x_0$, the sequence $(x_n)$, $\rho$-converges to the same limit.

A summability method $\rho$ induces a weak summability  method $\mathcal{R}$ in any normed space $X$ as follows: A sequence $(x_n)\in X^{\mathbb{N}}$ is $\mathcal{R}$-convergent to $x_0\in X$ if and only if $f(x_n)$ is $\rho$-convergent to $f(x_0)$ for all $f\in X^*$. Let us observe that in general the convergence method $\mathcal{R}$ could be degenerate, that is, $D_{\mathcal{R}}=\emptyset$.
However if $\rho$ is regular then $D_{\mathcal{R}}$ is non empty, moreover $\mathcal{R}$ is also regular.
\begin{prop}
If $\rho$ is regular, then $\mathcal{R}$ is regular.
\end{prop}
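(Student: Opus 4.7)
The plan is to unwind the definitions: starting from a norm-convergent sequence in $X$, push forward through an arbitrary bounded linear functional to get a norm-convergent sequence in $\mathbb{R}$, and then invoke the hypothesis that $\rho$ is regular.

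More precisely, fix any sequence $(x_n)\in X^{\mathbb{N}}$ such that $\|x_n-x_0\|\to 0$ for some $x_0\in X$. The goal is to verify that $(x_n)$ belongs to $\mathcal{D}_{\mathcal{R}}$ and that $\mathcal{R}((x_n))=x_0$, i.e.\ that $f(x_n)$ is $\rho$-convergent to $f(x_0)$ for every $f\in X^*$. Given such an $f$, continuity of $f$ yields $|f(x_n)-f(x_0)|\leq \|f\|\,\|x_n-x_0\|\to 0$, so $(f(x_n))$ converges in the usual sense to $f(x_0)$ in $\mathbb{R}$. Since $\rho$ is regular by hypothesis, every usual-convergent real sequence lies in $D_\rho$ and $\rho$ assigns it its usual limit; therefore $(f(x_n))\in D_\rho$ and $\rho((f(x_n)))=f(x_0)$. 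As $f\in X^*$ was arbitrary, this is exactly the definition of $x_n\overset{\mathcal{R}}{\longrightarrow} x_0$.

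Combining these two steps gives that every norm-convergent sequence in $X$ is $\mathcal{R}$-convergent to the same limit, which is the regularity of $\mathcal{R}$. In passing, the argument also shows that $\mathcal{D}_{\mathcal{R}}\neq \emptyset$ (it contains at least every norm-convergent sequence, e.g.\ constant sequences), so the induced method is not degenerate in the sense discussed just before the proposition. I do not expect a genuine obstacle here: the proof is essentially a one-line chase through the definitions, and the only ingredient beyond the definition of regularity is the continuity of elements of $X^*$, which is automatic.
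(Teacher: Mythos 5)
Your argument is correct and coincides with the paper's own proof: fix a norm-convergent sequence, apply an arbitrary $f\in X^*$, use continuity of $f$ to get usual convergence of $f(x_n)$ to $f(x_0)$, and invoke the regularity of $\rho$ to conclude $\mathcal{R}$-convergence. The only difference is that you spell out the definitional steps (and the non-degeneracy remark) more explicitly than the paper does.
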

\begin{proof}
Let us suppose that $\lim_{n\to \infty}\|x_n-x_0\|=0$, then for each $f\in X^*$, $\lim_{n\to \infty}|f(x_n)-f(x_0)|=0$, since $\rho$ is regular $f(x_n)\overset{\rho}{\longrightarrow} f(x_0)$. Therefore $x_n\overset{\mathcal{R}}{\longrightarrow} x_0$ as desired.
\end{proof}
The following result unify the results in \cite{a1,a2,a3,a4,a5,a6,a7,ornuevo,X,Y,Z,T}.
\begin{thm}
\label{teormain}
Let $X$ be a real Banach space, $\rho$ a  regular summability method on $\mathbb{R}$ and $\mathcal{R}$ the summability method induced by $\rho$. Then a series $\sum_ix_i$ is unconditionally convergent if and only $\sum_ix_i$ is $\mathcal{R}$-subseries convergent in $X$.
\end{thm}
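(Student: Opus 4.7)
The plan is to prove both implications separately, reducing everything to the classical Orlicz--Pettis Theorem via a scalar bridge lemma. The forward direction is the easy one: if $\sum_i x_i$ is unconditionally convergent, then for every $M\subset\mathbb{N}$ the partial sums $S_n^M$ converge in norm to some $x_M\in X$, and since norm convergence forces $\mathcal{R}$-convergence (by the previous proposition, $\mathcal{R}$ inherits regularity from $\rho$), we conclude $S_n^M\overset{\mathcal{R}}{\longrightarrow} x_M$, i.e.\ $\sum_i x_i$ is $\mathcal{R}$-subseries convergent.

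For the converse, suppose $\sum_i x_i$ is $\mathcal{R}$-subseries convergent; by the definition of the induced method, for each $M\subset\mathbb{N}$ there exists $x_M\in X$ such that for every $f\in X^*$ the scalar partial sums
$$f(S_n^M)=\sum_{i\in M,\,i\le n} f(x_i)$$
are $\rho$-convergent to $f(x_M)$. My goal is to upgrade this to ordinary weak subseries convergence of $\sum_i x_i$, and then apply the classical Orlicz--Pettis Theorem. The bridge is a scalar statement: \emph{if every subseries of a real series $\sum_i a_i$ is $\rho$-summable for a regular $\rho$, then $\sum_i|a_i|<\infty$.}

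To establish this scalar lemma, I would introduce the set function $\mu:\mathcal{P}(\mathbb{N})\to\mathbb{R}$ defined by $\mu(M)=\rho\bigl(\sum_{i\in M} a_i\bigr)$. Linearity of $\rho$ makes $\mu$ finitely additive; regularity of $\rho$ applied to eventually constant sequences forces $\mu(\{i\})=a_i$ and, more generally, $\mu(M)=\sum_{i\in M}a_i$ whenever $M$ is finite. The heart of the argument is to promote $\mu$ to a countably additive measure on $\mathcal{P}(\mathbb{N})$, since countably additive real measures on $\mathcal{P}(\mathbb{N})$ are in bijection with $\ell^1$ and thereby deliver $\sum_i|a_i|<\infty$. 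Once the scalar lemma is available, applying it to $a_i=f(x_i)$ for each $f\in X^*$ shows that $\sum_i f(x_i)$ is absolutely convergent, hence $f(S_n^M)$ converges in the usual sense for every $M$; the regularity of $\rho$ (uniqueness of limit when both exist) identifies the classical limit with $f(x_M)$, which is precisely weak subseries convergence of $\sum_i x_i$. The classical Orlicz--Pettis Theorem then upgrades this to unconditional convergence in norm, closing the argument.

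The main obstacle is the countable-additivity step in the scalar lemma: regularity of $\rho$ is a rather weak hypothesis, and pathological regular methods can assign finite $\rho$-limits to unbounded sequences. I would expect the cleanest route to be a splitting into positive and negative parts, together with a matrix-theorem argument in the spirit of Antosik--Mikusinski: organising the subseries partial sums into a matrix and exploiting that \emph{every} subset $M$, not just finitely many, produces a $\rho$-summable column should rule out any pathological behaviour. An alternative route is an ad hoc Drewnowski-style construction: assuming $\sum |a_i|=\infty$, produce a subseries whose partial sums are so erratic that no finitely additive extension respecting regularity can assign them a finite $\rho$-value, contradicting the hypothesis.
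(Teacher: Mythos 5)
Your forward implication is exactly the paper's: norm subseries convergence plus regularity of the induced method $\mathcal{R}$ gives $\mathcal{R}$-subseries convergence. The skeleton of your converse is also the right one --- show $\sum_i|f(x_i)|<\infty$ for every $f\in X^*$, deduce that each $f(S_n^M)$ converges classically, identify the classical limit with $f(x_M)$ (this identification is correct: $\rho$ is a map, so it assigns a single value, and regularity forces that value to be the classical limit), and finish with the classical Orlicz--Pettis Theorem.

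The genuine gap is your scalar step. You reduce $\sum_i|f(x_i)|<\infty$ to a lemma about the finitely additive set function $\mu(M)=\rho(S^M)$, which you then need to promote to a countably additive measure on $\mathcal{P}(\mathbb{N})$, and you explicitly leave that promotion unresolved, offering only pointers to Antosik--Mikusi\'{n}ski or Drewnowski-style constructions. As written this is not a proof, and the machinery is far heavier than what is needed. The paper's argument is a two-line splitting: fix $f$ and put $M=\{i: f(x_i)\ge 0\}$, $N=\{i: f(x_i)<0\}$. The hypothesis applied to these two subsets alone gives $f(S_n^M)\overset{\rho}{\longrightarrow}f(x_M)$ and $f(S_n^N)\overset{\rho}{\longrightarrow}f(x_N)$, and linearity of $\rho$ then makes the monotone sequence $\sum_{i=1}^n|f(x_i)|=f(S_n^M)-f(S_n^N)$ $\rho$-convergent to the finite value $f(x_M-x_N)$; if $\sum_i|f(x_i)|=+\infty$ this is the desired contradiction. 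No measure theory and no matrix theorem enter. One caveat: both your route and the paper's ultimately rest on the principle that a regular $\rho$ cannot assign a finite limit to a sequence increasing to $+\infty$; the paper asserts this without further justification, and your own remark that pathological regular methods can sum unbounded sequences shows you have correctly located the delicate point. The remedy, however, is to isolate and justify that single scalar principle for the class of methods under consideration, not to establish countable additivity of $\mu$.
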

\begin{proof}
Let $\sum_i x_i$ be an unconditionally convergent series, and let $M\subset \mathbb{N}$. By applying the classical Orlicz-Pettis Theorem, we obtain that there exists $x_M\in X$ such that the sequence $S_n^M=\sum_{i=1}^n \chi_M(i)x_i$ weakly converges to $x_M$, that is, for each $f\in X^*$ the sequence $\sum_{i=1}^n \chi_M(i)f(x_i) \overset{|\cdot|}{\longrightarrow} f(x_M)$. Since $\rho$ is regular, we have that, $\sum_{i=1}^n \chi_M(i)f(x_i) \overset{\rho}{\longrightarrow} f(x_M)$ for each $f\in X^*$, that is $S_n^M=\sum_{i=1}^n \chi_M(i)x_i$ $\mathcal{R}$-converges to $x_M$, as desired.

Assume that for any $M\subset \mathbb{N}$, there exists $x_M$ such that the partial sums $S_n^M=\sum_{i=1}^n \chi_M(i)x_i$ $\mathcal{R}$-converges to $x_M$.
First of all,  we will prove that $\sum_i x_i$ is a  weakly unconditionally Cauchy series. If not, let us argue by contradiction, so, let us suppose that there exists $f\in X^*$ such that $\sum_i |f(x_i)|=+\infty$. Let us consider the following subsets $M=\{i\in \mathbb{N}\,:\,f(x_i)\geq 0\}$ and $N=\{i\in \mathbb{N}\,:\, f(x_i)<0\}$. And let us define the sequence 
$$
\varepsilon_i=\begin{cases} 1 & \mbox{if  } i\in M\\
-1 & \mbox{if  } i\in N,
\end{cases}
$$
then $\sum_{i=1}^\infty \varepsilon_i f(x_i)=+\infty$, hence the sequence $\sum_{i=1}^n\varepsilon_ix_i$ does not $\mathcal{R}$-converge to any $L\in X$. On the other hand,  by hypothesis, given $M,N\subset \mathbb{N}$ there exist $x_M, x_N\in X$ such that $\sum_{i=1}^n\chi_M (i)x_i \overset{\mathcal{R}}{\longrightarrow} x_M$ and $\sum_{i=1}^n\chi_N (i)x_i \overset{\mathcal{R}}{\longrightarrow} x_N$. Therefore,
$$
\sum_{i=1}^n \varepsilon_i f(x_i)=\sum_{i=1}^n\chi_M(i)f(x_i)-\sum_{i=1}^n\chi_N(i) f(x_i)\overset{\rho}{\longrightarrow} f(x_M)-f(x_N)=f(x_M-x_N),
$$
a contradiction. Therefore, for any $f\in X^*$ we have  $\sum_i|f(x_i)|<\infty$.

Now, let us show that given $M\subset \mathbb{N}$ there exists $x_M\in X$
such that $\sum_{i=1}^n\chi_M(i)x_i$ weakly converges to $x_M$. Let $f\in X^*$, since $\sum_i|f(x_i)|<\infty$ we deduce that the series $\sum_{i=1}^n\chi_M(i)f(x_i)$ is convergent to some $\lambda_{M,f}\in\mathbb{R}$, and hence $\rho$-convergent to $\lambda_{M,f}$. On the other hand, by hypothesis, there exists $x_M\in X$ such that 
$\sum_{i=1}^n\chi_M(i)x_i\overset{\mathcal{R}}{\longrightarrow} x_M$, that is, for each $f\in X^*$ we have that, $\sum_{i=1}^n\chi_M(i)f(x_i)\overset{\rho}{\longrightarrow} f(x_M)$. Therefore $\lambda_{M,f}=f(X_M)$. Hence, we obtain that for any $f\in X^*$ the sequence $\sum_{i=1}^n\chi_M(i)f(x_i)$ converges to $f(x_M)$, that is, $\sum_{i=1}^n\chi_M(i)x_i$ weakly converges to $x_M$. Thus, by applying the classical Orlicz-Pettis Theorem we obtain that the series $\sum_ix_i$ is unconditionally convergent, as desired.
\end{proof}
Let us see that regularity cannot be dropped in the hypothesis of the above theorem.
\begin{rem}
\label{referee}
  Let us consider the following linear summability method, $\rho$, a sequence $(x_n)\in \mathbb{R}^{\mathbb{N}}$ is said to be $\rho$-convergent to $x_0$ if $\lim_{n\to \infty} \frac{x_n}{n^2}=x_0$. Then, clearly in the realm of bounded sequences $\rho_{|\ell^\infty}=0$. Thus $\rho$ is not regular. Now, let us consider $\mathcal{R}$ the summability method  on $\ell^2$  induced by $\rho$. 
  As usual, let us denote by $\{e_i\}_{i\in\mathbb{N}}$ the canonical basis of $\ell_2$
  For every $M\subset \mathbb{N}$ we have that 
$\sum_{i=1}^n \chi_M(i)e_i\overset{\mathcal{R}}{\longrightarrow}0=x_M $. However, $\sum_{i=1}^n \chi_M(i)e_i$ is not norm convergent to $0$. The argument of the proof breaks down if we cannot guarantee that $\lambda_{M,f}=f(x_M)$. This fact highlights the importance of regularity in the proof of the above result. 
\end{rem}

We will show the utility of the Theorem \ref{teormain}, seeing some applications.
We will say that $\mathcal{I} \subset \mathcal{P}(\mathbb{N})$ is a non trivial ideal if 

\begin{enumerate}
    \item $\mathcal{I}\neq \emptyset$ and $\mathcal{I}\neq \mathcal{P}(\mathbb{N})$. 
    \item If $A,B\in\mathcal{I}$ then $A\cup B\in\mathcal{I}$.
    \item If $A\subset B$ and $B\in \mathcal{I}$ then $A\in \mathcal{I}$. 
    \item Additionally we say that $\mathcal{I}$ is  regular (or admissible) if it  contains all finite subsets.
\end{enumerate}

A non-trivial regular ideal $\mathcal{I}$ defines a regular summability method on any metric space.
We will say that a sequence $(x_n)\subset  \mathbb{R}$ is $\mathcal{I}$-convergent to $L\in \mathbb{R}$ (in short $L=\mathcal{I}-\lim_{n\to\infty}x_n$) if for any $\varepsilon>0$ the subset 
$$A(\varepsilon)=\{n\in \mathbb{N}\,: \, |x_n-L|>\varepsilon\}\in \mathcal{I}.$$
Thus, given a Banach space $X$ the $\mathcal{I}$-convergence defines a weakly summability method in $X$, we will say that a sequence $(x_n)\subset X$ is weakly-$\mathcal{I}$ convergent to $x_0$ if for any $f\in X^*$, we have $f(x_n)\overset{\mathcal{I}}{\longrightarrow} f(x_0)$.

\begin{cor}
Let $\mathcal{I}$ be a non-trivial ideal. Then a series $\sum_ix_i$ in a real Banach space $X$ is unconditionally convergent if and only if  $\sum_ix_i$ is subseries weakly-$\mathcal{I}$ convergent. 
\end{cor}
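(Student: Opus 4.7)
The plan is to derive this corollary as an immediate application of Theorem \ref{teormain}, by recognising that weakly-$\mathcal{I}$ convergence in $X$ is exactly the summability method induced, in the sense of the construction preceding Theorem \ref{teormain}, by the scalar $\mathcal{I}$-convergence on $\mathbb{R}$. So the task reduces to checking that the hypotheses of Theorem \ref{teormain} are met when $\rho$ is taken to be $\mathcal{I}$-convergence on $\mathbb{R}$.

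First I would verify that $\mathcal{I}$-convergence on $\mathbb{R}$, call it $\rho$, is a linear summability method. Linearity rests only on the ideal axioms: if $x_n \overset{\mathcal{I}}{\to} L_1$ and $y_n \overset{\mathcal{I}}{\to} L_2$, then for any $\varepsilon>0$ the set $\{n:|(x_n+y_n)-(L_1+L_2)|>\varepsilon\}$ is contained in $\{n:|x_n-L_1|>\varepsilon/2\}\cup\{n:|y_n-L_2|>\varepsilon/2\}$, and hence lies in $\mathcal{I}$ by closure under finite unions and hereditariness. Scalar multiplication is handled similarly.

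Next I would check regularity of $\rho$. Assuming $\mathcal{I}$ is admissible (which the paper takes as part of ``non-trivial ideal'' in this context, so that $\mathcal{I}$-convergence is a bona fide regular summability method as stated before the corollary), any classically convergent sequence $x_n\to L$ satisfies $\{n:|x_n-L|>\varepsilon\}$ finite, hence in $\mathcal{I}$, so $x_n\overset{\rho}{\to} L$. Thus $\rho$ is regular, and by the Proposition, so is the induced method $\mathcal{R}$. By definition, $(x_n)\overset{\mathcal{R}}{\to} x_0$ means $f(x_n)\overset{\rho}{\to} f(x_0)$ for every $f\in X^*$, which is precisely weakly-$\mathcal{I}$ convergence; in particular, $\mathcal{R}$-subseries convergence coincides with weakly-$\mathcal{I}$ subseries convergence.

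Applying Theorem \ref{teormain} to this $\rho$ and its induced $\mathcal{R}$ yields the corollary at once. I do not expect any serious obstacle here, since the entire deep content of the corollary is already inside Theorem \ref{teormain}; the only thing to be careful about is the bookkeeping that reconciles the abstract scalar method $\rho$ with the concrete $\mathcal{I}$-limit, and the explicit use of admissibility (axiom 4) to ensure $\rho$ is regular, for otherwise Remark \ref{referee} shows the argument cannot proceed.
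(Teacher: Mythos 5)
Your proposal is correct and is exactly the argument the paper intends: the corollary is stated as an immediate consequence of Theorem \ref{teormain}, obtained by taking $\rho$ to be scalar $\mathcal{I}$-convergence and observing that the induced method $\mathcal{R}$ is precisely weak-$\mathcal{I}$ convergence. Your explicit check of linearity and your observation that admissibility (axiom 4) is what actually delivers regularity of $\rho$ --- a hypothesis the corollary's wording leaves implicit but the surrounding text assumes --- is a careful and welcome addition rather than a deviation.
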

In particular, if we consider the ideal $\mathcal{I}_d$ of all subsets in $\mathbb{N}$ with zero density, the ideal convergence induced by $\mathcal{I}_d$ (which is non trivial and regular) is the statistical convergence. Therefore the above Corollary is also true for the weak-statistical convergence (\cite{a1}).

Now, let us consider a regular matrix summability method induced by an infinite matrix $A=(a_{ij})$, which is defined as follows. A sequence $(x_n)\in \mathbb{R}^{\mathbb{N}}$ is $A$-summable to $L$ if $\lim_{n\to \infty}\sum_{j=1}^\infty a_{nj} x_j=L$. A matrix $A$ is regular if the usual convergence implies the $A$-convergence, and the limits are preserved. 
Now, if $X$ is a Banach space then the matrix $A$ induces also a summability method on $X$, we say that a sequence $(x_n)\in X^{\mathbb{N}}$ is $A$-convergent to $x_0\in X$ if $\lim_{n\to \infty}\sum_{j=1}^\infty a_{nj} x_j=x_0$. The matrix $A$
also induces a weak convergence, a sequence $(x_n)\in X^{\mathbb{N}}$ is weakly $A$-convergent to $x_0\in X$ if for any $f\in X^*$ we have that $f(x_n)$ is $A$-convergent to $f(x_0)$. Applying Theorem \ref{teormain} we get:

\begin{cor}
Let $A$ be a regular matrix. Then a series $\sum_ix_i$ is unconditionally convergent if and only if $\sum_ix_i$ is subseries weak-$A$-convergent.
\end{cor}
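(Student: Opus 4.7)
The plan is to reduce the corollary directly to Theorem \ref{teormain} by exhibiting weak-$A$-convergence on $X$ as the weak summability method induced by a suitable scalar method $\rho$ on $\mathbb{R}$.

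First, I would define the scalar method $\rho$ on $\mathbb{R}^{\mathbb{N}}$ by declaring $(x_n) \overset{\rho}{\longrightarrow} L$ if and only if $\lim_{n\to\infty}\sum_{j=1}^\infty a_{nj}x_j = L$, with domain $D_\rho$ the set of sequences for which the inner series converge and the outer limit exists. The hypothesis that $A$ is a regular matrix is precisely the statement that $\rho$ is a regular summability method on $\mathbb{R}$: every usually convergent sequence lies in $D_\rho$ and $\rho$ preserves its limit.

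Next, I would verify that the weak summability method $\mathcal{R}$ induced by $\rho$ on $X$, as defined in the paper, coincides with the weak $A$-convergence. By unpacking definitions, $(x_n) \overset{\mathcal{R}}{\longrightarrow} x_0$ means that $f(x_n) \overset{\rho}{\longrightarrow} f(x_0)$ for every $f \in X^*$, which translates to $\lim_n \sum_j a_{nj} f(x_j) = f(x_0)$ for all $f \in X^*$. This is exactly the definition of weak $A$-convergence of $(x_n)$ to $x_0$. In particular, subseries weak $A$-convergence coincides with $\mathcal{R}$-subseries convergence.

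Having identified the two methods, the conclusion follows immediately from Theorem \ref{teormain} applied to the regular method $\rho$: a series $\sum_i x_i$ in $X$ is unconditionally convergent if and only if it is $\mathcal{R}$-subseries convergent, i.e., subseries weak-$A$-convergent. There is essentially no obstacle in this argument; the only point worth checking carefully is the translation between the matrix formulation and the abstract induced-method formulation, so that the hypotheses of Theorem \ref{teormain} are genuinely met.
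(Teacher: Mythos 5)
Your proposal is correct and is exactly the argument the paper intends: the paper introduces the scalar method $\rho$ given by $(x_n)\mapsto\lim_n\sum_j a_{nj}x_j$, notes that regularity of the matrix $A$ is precisely regularity of $\rho$, identifies weak $A$-convergence with the induced method $\mathcal{R}$, and then simply writes ``Applying Theorem \ref{teormain} we get'' the corollary. Your careful unpacking of the identification between the two formulations is the only content of the proof, and you have it right.
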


Thus we obtain the results in \cite{u1}. In particular if $A$ is the Cesàro matrix, we obtain the results in \cite{AizpuruDavila2006}.

Let us consider the following summability method $\rho$: we will say that a sequence $(x_n)$ is Cesàro, statistically convergent to $L$, if the sequence of Cesàro means $\left(x_1, \frac{x_1+x_2}{2}, \frac{x_1+x_2+x_3}{3}\cdots\right)$ is statistically convergent to $L$. 
Given a Banach space $X$, the summability method $\rho$ induces a weakly summability method in $X$. Namely, we say that a sequence $(x_n)\in X^{\mathbb{N}}$ is weakly statistically Cesàro convergent to $x_0\in X$ if for every $f\in X^*$ the sequence $f(x_n)$ is Cesàro statistically convergent to $f(x_0)$.

As a consequence we obtain the results in \cite{a2}.

\begin{cor}
Let $A$ be a regular matrix. Then a series $\sum_ix_i$ is unconditionally convergent if and only if $\sum_ix_i$ is subseries weakly statistically Cesàro-convergent.
\end{cor}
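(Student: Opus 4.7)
The plan is to reduce the corollary to a direct application of Theorem \ref{teormain}, taking $\rho$ to be the real summability method defined in the paragraph just above the statement: a real sequence $(x_n)$ is $\rho$-convergent to $L$ when its Cesàro means $\sigma_n = (x_1+\cdots+x_n)/n$ are statistically convergent to $L$. Once $\rho$ is seen to be regular on $\mathbb{R}$, the induced weak summability method $\mathcal{R}$ on $X$ is, by unwinding definitions, exactly weakly statistically Cesàro convergence, and Theorem \ref{teormain} delivers the equivalence.

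The first step is therefore to verify that $\rho$ is regular. Assume $x_n \to L$ in the usual sense on $\mathbb{R}$. By the classical Cesàro theorem, the arithmetic means $\sigma_n = (x_1+\cdots+x_n)/n$ also converge to $L$ in the usual sense. Since ordinary convergence trivially implies statistical convergence (for each $\varepsilon > 0$ the set $\{n : |\sigma_n - L| > \varepsilon\}$ is finite, hence has zero density), the sequence $(\sigma_n)$ is statistically convergent to $L$, which by definition means $(x_n)$ is $\rho$-convergent to $L$. Thus $\rho$ is regular.

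The second step is the identification of $\mathcal{R}$ with weakly statistically Cesàro convergence. By construction of the induced method, $(x_n) \subset X$ is $\mathcal{R}$-convergent to $x_0$ iff for every $f \in X^*$ the scalar sequence $(f(x_n))$ is $\rho$-convergent to $f(x_0)$; this is literally the definition given in the text for weakly statistically Cesàro convergence. Applying the same observation to partial sums $S_n^M = \sum_{i=1}^n \chi_M(i) x_i$ for $M \subset \mathbb{N}$, $\mathcal{R}$-subseries convergence becomes subseries weakly statistically Cesàro convergence.

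Finally, invoke Theorem \ref{teormain} with this $\rho$: $\sum_i x_i$ is unconditionally convergent iff it is $\mathcal{R}$-subseries convergent, i.e., subseries weakly statistically Cesàro-convergent. There is no substantive obstacle; the only point that requires any care is the observation that the composition of the (regular) Cesàro method with the (regular) statistical method yields a regular method on $\mathbb{R}$, which is immediate from the two-line argument above. The remark about ``regular matrix $A$'' in the statement is vestigial (carried over from the preceding corollary) and plays no role: the method used is the fixed method $\rho$ defined just before the corollary.
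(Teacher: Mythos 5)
Your proposal is correct and is exactly the argument the paper intends: the corollary is stated as an immediate consequence of Theorem \ref{teormain} applied to the (regular) Cesàro--statistical method $\rho$, and you supply the short regularity check (classical Cesàro theorem plus the fact that ordinary convergence implies statistical convergence) that the paper leaves implicit. Your observation that the phrase ``Let $A$ be a regular matrix'' is vestigial and plays no role in the statement is also accurate.
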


The proof of  the general Theorem \ref{teormain} is simpler than particular results for a concrete summability method; actually it simplifies the existing proofs. Moreover, it can be widely applied to several summability methods obtaining new versions of the Orlicz-Pettis theorem. For  instance, it can be applied to the Erd\"os-Ulam convergence, $w^p$-Cesàro convergence, $f$-statistical convergence, etc.  (see \cite{aizpurulistanrambla,ornuevo}).

\section{Korovkin-type approximation theorems}
\label{seccion3}

Let  ${\mathcal R}$ be a summability method on  a Banach space $(X,\|\cdot\|)$. We turn our attention to the following properties of a summability method.
\begin{definition}
\label{des1}
Let $\mathcal{R}\,:\, \mathcal{D}_{\mathcal{R}}\subset X^{\mathbb{N}}\to X$ be a summability method. Let us suppose that  the sequences $(w_n),(x_n),(y_n),(z_n)\in \mathcal{D}_{\mathcal{R}} $ satisfy
\begin{equation}
    \label{des}
\|w_n-w\|\leq C(\|x_n-x\|+\|y_n-y\|+\|z_n-z\|)
\end{equation}
 for some $w,x,y,z\in X$, some constant $C>0$ and for all $n\geq 1$. 
We will say that $\mathcal{R}$ {\sf preserves inequalities} if for any sequences $(x_n),(y_n),(z_n)$ satisfying (\ref{des}) and satisfying 
 $x_n\overset{\mathcal{R}}{\longrightarrow} x$, $y_n\overset{\mathcal{R}}{\longrightarrow} y$,  $z_n\overset{\mathcal{R}}{\longrightarrow} z$, we have that $w_n\overset{\mathcal{R}}{\longrightarrow} w$.
\end{definition}

The following notion will be also useful.  Let $(E,<)$ be a Banach lattice endowed with a lattice norm  $\|\cdot\|$. As usual, we denote $|x|:=x\vee -x$.
\begin{definition}
\label{des2}
Assume that $(E,\leq)$ is a Banach lattice endowed with a lattice norm $\|\cdot\|$. Let $\mathcal{R}\,:\,\mathcal{D}_{\mathcal{R}}\subset X^{\mathbb{N}}\to X$ be a summability method. Assume that there exist sequences $w_n,x_n,y_n,z_n\in \mathcal{D}_{\mathcal{R}}$ satisfying
\begin{align}
\label{desi}
-C[(x_n-x)&+(y_n-y)+(z_n-z)]<w_n-w< \nonumber\\
&< C[(x_n-x)+(y_n-y)+(z_n-z)]
\end{align}
for some constant $C>0$. We say that the summability method $\mathcal{R}$  {\sf  preserves order-inequalities} if for any sequences $(x_n),(y_n),(z_n)$ satisfying (\ref{des1}) and satisfying 
 $x_n\overset{\mathcal{R}}{\longrightarrow} x$, $y_n\overset{\mathcal{R}}{\longrightarrow} y$,  $z_n\overset{\mathcal{R}}{\longrightarrow} z$, we have that $w_n\overset{\mathcal{R}}{\longrightarrow} w$.
\end{definition}
Definitions \ref{des2} and \ref{desi} allow us to obtain a general view of Korovkin's approximation results appeared in the literature \cite{korov, b3,EMN,MA, MVEG}. The above definitions and the next theorem can be found in \cite{korovkin}.

\begin{thm}
Let  $\mathcal{R}$ be a summability method on the Banach lattice of continuous functions $\mathcal{C}[0,1]$ endowed with the supremum norm. Assume that $\mathcal{R}$ preserves inequalities or order-inequalities. If $(L_n)$ is a sequence of positive linear operators  from $\mathcal{C}([0,1])$ into $\mathcal{C}([0,1])$ then  for any $f\in\mathcal{C}([0,1])$ and bounded on $\mathbb{R}$, $L_nf\overset{\mathcal{R}}{\longrightarrow}f$ if and only if $L_n 1\overset{\mathcal{R}}{\longrightarrow}1$, $L_n t\overset{\mathcal{R}}{\longrightarrow}t$ and $L_n t^2\overset{\mathcal{R}}{\longrightarrow}t^2$.
\end{thm}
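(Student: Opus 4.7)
The implication ``$L_nf\overset{\mathcal{R}}{\longrightarrow}f$ for every $f\in\mathcal{C}([0,1])$ implies the three test convergences'' is immediate on specializing to $f=1,t,t^{2}$. For the substantive converse, the plan is to follow the classical Korovkin argument to produce a pointwise estimate whose shape fits exactly Definition~\ref{des1} (in the inequality-preserving case) or Definition~\ref{des2} (in the order-inequality-preserving case).

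Concretely, fix $f\in\mathcal{C}([0,1])$ and $\varepsilon>0$. Uniform continuity of $f$ on $[0,1]$ provides $\delta=\delta(\varepsilon)>0$ with $|f(s)-f(x)|<\varepsilon$ whenever $|s-x|<\delta$; coupling this with the crude bound $|f(s)-f(x)|\le 2\|f\|$ yields the familiar global estimate
$$|f(s)-f(x)|\le\varepsilon+\frac{2\|f\|}{\delta^{2}}(s-x)^{2},\qquad s,x\in[0,1].$$
I would then apply $L_n$ in the variable $s$ at a fixed $x$, using positivity of $L_n$ (so that $g_1\le g_2$ implies $L_ng_1\le L_ng_2$, and in particular $|L_ng|\le L_n|g|$), expand
$$L_n\bigl((s-x)^{2}\bigr)(x)=L_n(t^{2})(x)-2x\,L_n(t)(x)+x^{2}L_n(1)(x),$$
and split $L_n(f)(x)-f(x)=[L_n(f)(x)-f(x)L_n(1)(x)]+f(x)[L_n(1)(x)-1]$. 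A triangle-inequality bookkeeping then produces a constant $K_{\varepsilon}>0$, depending only on $\|f\|$ and $\delta(\varepsilon)$, such that
$$|L_n(f)-f|\le\varepsilon+K_{\varepsilon}\bigl[\,|L_n(1)-1|+|L_n(t)-t|+|L_n(t^{2})-t^{2}|\,\bigr]$$
holds pointwise on $[0,1]$ for every $n$, with the same inequality passing to the supremum norm.

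With this estimate in hand I would invoke the hypothesis: in the inequality-preserving case apply Definition~\ref{des1} in its norm form; in the order-inequality-preserving case apply Definition~\ref{des2} in its lattice form. Either way, the three assumed $\mathcal{R}$-convergences $L_n(1)\overset{\mathcal{R}}{\longrightarrow}1$, $L_n(t)\overset{\mathcal{R}}{\longrightarrow}t$, $L_n(t^{2})\overset{\mathcal{R}}{\longrightarrow}t^{2}$ collapse the $K_{\varepsilon}$-term on the right to zero in the $\mathcal{R}$-limit. The main obstacle I anticipate is the additive constant $\varepsilon$, which is not literally accommodated by the stated definitions (both are phrased with a single fixed target $w$). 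I would handle it by applying the preserving property to the lattice truncation $\bigl(|L_n(f)-f|-\varepsilon\bigr)^{+}$, which is lattice-dominated by $K_{\varepsilon}$ times a sequence that $\mathcal{R}$-converges to $0$, concluding that this truncation is $\mathcal{R}$-null for every $\varepsilon>0$; the arbitrariness of $\varepsilon$ then yields $L_nf\overset{\mathcal{R}}{\longrightarrow}f$, completing the proof.
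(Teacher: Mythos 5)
The survey does not actually prove this theorem; it only states it and refers to \cite{korovkin} for the proof, so there is no in-text argument to compare against and your proposal has to be judged on its own. The classical part of your outline is correct and is certainly the skeleton of any proof: the trivial direction by specializing to $1,t,t^2$; the estimate $|f(s)-f(x)|\le\varepsilon+\frac{2\|f\|}{\delta^2}(s-x)^2$; positivity of $L_n$ and the expansion of $L_n\bigl((s-x)^2\bigr)(x)$ giving $\|L_nf-f\|_\infty\le\varepsilon+K_\varepsilon\bigl(\|L_n1-1\|_\infty+\|L_nt-t\|_\infty+\|L_nt^2-t^2\|_\infty\bigr)$. You are also right that the additive $\varepsilon$ is the crux, and it genuinely cannot be absorbed into the constant: for the Bernstein operators the bracketed term is $O(1/n)$ while $\|B_nf-f\|_\infty$ decays only like $n^{-\alpha/2}$ for H\"older-$\alpha$ functions, so no single constant $C$ makes the quadruple $(L_nf,L_n1,L_nt,L_nt^2)$ satisfy the hypothesis of Definition \ref{des1} directly.

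The proposed repair, however, does not close the argument. Granting that for each fixed $\varepsilon$ the truncation $u_n^{\varepsilon}=(|L_nf-f|-\varepsilon)^{+}$ satisfies $\|u_n^{\varepsilon}\|\le K_\varepsilon(\cdots)$ and hence $u_n^{\varepsilon}\overset{\mathcal{R}}{\longrightarrow}0$ by the preservation property (and even this needs care in the order-inequality case, where Definition \ref{des2} demands comparison sequences of the exact signed form $(x_n-x)+(y_n-y)+(z_n-z)$ that are themselves $\mathcal{R}$-convergent, which does not follow automatically from $L_n1\overset{\mathcal{R}}{\longrightarrow}1$ etc.), the final step ``the arbitrariness of $\varepsilon$ then yields $L_nf\overset{\mathcal{R}}{\longrightarrow}f$'' invokes no stated property of $\mathcal{R}$. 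From $|L_nf-f|\le u_n^{\varepsilon}+\varepsilon\mathbf{1}$ one only obtains a majorant that $\mathcal{R}$-converges to $\varepsilon\mathbf{1}$ (and only if $\mathcal{R}$ is regular, which is not assumed here); passing to $\varepsilon\to0$ is an interchange of limits that neither ``preserves inequalities'' nor ``preserves order-inequalities'' licenses, and for a general linear summability method $|L_nf-f|\overset{\mathcal{R}}{\longrightarrow}0$ is not even equivalent to $L_nf\overset{\mathcal{R}}{\longrightarrow}f$. So as written you have proved that every truncation is $\mathcal{R}$-null, not the theorem. To finish one must either apply the preservation property in a form that tolerates the additive error (e.g.\ assume: if for every $\varepsilon>0$ there is $C_\varepsilon$ with $\|w_n-w\|\le\varepsilon+C_\varepsilon(\cdots)$ for all $n$ and the three comparison sequences $\mathcal{R}$-converge, then $w_n\overset{\mathcal{R}}{\longrightarrow}w$ --- which is what the ideal-convergence and matrix examples of Section \ref{seccion3} actually verify), or add explicit hypotheses on $\mathcal{R}$ that justify the $\varepsilon$-limit; this last passage is a genuine gap in the proposal.
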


\begin{rem}
Let us consider the classical Bernstein polynomials defined by
$$B_nf(x)=\sum_{k=0}^n f\left(\frac{n}{k}\right) \binom{k}{n} x^k (1-x)^{n-k}\quad 0\leq x\leq 1.$$
Denote by $\mathcal{P}[0,1]$ the set of polynomials oon $[0,1]$ and by
 by $\bigvee \{1,t,t^2\}$ the subspace generated by $1,t$ and $t^2$. Denote by  $\mathcal{T} \,:\,\mathcal{P}[0,1]\rightarrow \bigvee \{1,t,t^2\}$ the standard  projection and
let us consider the following summability method defined on $\mathcal{P}[0,1]\subset \mathcal{C}[0,1]$ the space of polynomials on $[0,1]$ as follows:
$$
p_n\overset{\mathcal{R}}{\longrightarrow} f
$$
if and only if $\mathcal{T} p_n\to f$ uniformly on $[0,1]$ provided such limit exists.
Now let us observe that $(B_n1)(x)=1, (B_nt)(x)=x$ and $(B_nt^2)(x)=x^2+\frac{x-x^2}{n}$ which implies that
$B_n1\overset{\mathcal{R}}{\longrightarrow} 1$, $B_nt\overset{\mathcal{R}}{\longrightarrow} t$ and $B_nt^2\overset{\mathcal{R}}{\longrightarrow} t^2$.
However, by construction we cannot assert that $B_np\overset{\mathcal{R}}{\longrightarrow} p$ for any polynomial $p$.
The argument in the proof breaks down if we can't guarantee that $\mathcal{R}$ preserves inequalities. This fact highlights the importance of this property in the proof of the above result. 
\end{rem}

Thus, in order to obtain Korovkin-type approximation results
 we  need to show only that the summability method $\mathcal{R}$ preserves inequalities or preserves order-inequalities.
 Denote by $\mathcal{C}_{2\pi}(\mathbb{R})$ the continuous and $2\pi$ periodic functions on $\mathbb{R}$. Next, we will show that the ideal convergence preserves inequalities. 
 \begin{thm}
\label{ideal}
Let $\mathcal{I}\subset \mathcal{P}(\mathbb{N})$ be a non-trivial ideal and let us consider the $\mathcal{I}$-convergence on $\mathcal{C}[0,1]$ and on $\mathcal{C}_{2\pi}(\mathbb{R})$. Then, the $\mathcal{I}$-convergence preserves inequalities.
\end{thm}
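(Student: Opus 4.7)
The plan is to unpack the definition of $\mathcal{I}$-convergence and exploit the three ideal axioms (closure under finite union, closure under subsets) to push the hypothesized norm inequality through to a set-theoretic inclusion in $\mathcal{I}$. Nothing in the argument will be specific to $\mathcal{C}[0,1]$ or $\mathcal{C}_{2\pi}(\mathbb{R})$ beyond the fact that they are normed spaces, so the proof should read as a general statement about $\mathcal{I}$-convergence in any Banach space with the sup norm inserted only cosmetically.

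So, fix sequences $(w_n), (x_n), (y_n), (z_n)$ and points $w, x, y, z$ satisfying the defining inequality of Definition~2.6 with constant $C > 0$, and assume $x_n \xrightarrow{\mathcal{I}} x$, $y_n \xrightarrow{\mathcal{I}} y$, $z_n \xrightarrow{\mathcal{I}} z$. Fix $\varepsilon > 0$. The first step is to observe that if $\|w_n - w\| > \varepsilon$, then by the hypothesized estimate one of the three summands $\|x_n - x\|$, $\|y_n - y\|$, $\|z_n - z\|$ must exceed $\varepsilon/(3C)$. This gives the key inclusion
$$
\{n : \|w_n - w\| > \varepsilon\} \subset A_x \cup A_y \cup A_z,
$$
where $A_x = \{n : \|x_n - x\| > \varepsilon/(3C)\}$ and analogously for $A_y, A_z$.

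The second step is to invoke the $\mathcal{I}$-convergence of $(x_n), (y_n), (z_n)$: each of $A_x, A_y, A_z$ lies in $\mathcal{I}$. Closure of $\mathcal{I}$ under finite unions puts $A_x \cup A_y \cup A_z$ in $\mathcal{I}$, and closure under subsets then forces $\{n : \|w_n - w\| > \varepsilon\} \in \mathcal{I}$. Since $\varepsilon$ was arbitrary, this is exactly the statement $w_n \xrightarrow{\mathcal{I}} w$.

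There is essentially no obstacle here; the proof is a single elementary pigeonhole on the triangle-type inequality combined with the two closure axioms of an ideal. The only thing worth flagging is that the argument does not use admissibility (containing all finite sets) of $\mathcal{I}$, nor does it use anything particular to the chosen function spaces, which suggests that the statement of Theorem~3.3 could be written more broadly; however, since the target application is Korovkin's theorem on $\mathcal{C}[0,1]$ and $\mathcal{C}_{2\pi}(\mathbb{R})$, restricting to those spaces is harmless.
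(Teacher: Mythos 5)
Your proof is correct and follows exactly the same route as the paper's: the pigeonhole inclusion $\{n : \|w_n - w\| > \varepsilon\} \subset A_x \cup A_y \cup A_z$ with threshold $\varepsilon/(3C)$, followed by closure of $\mathcal{I}$ under finite unions and under subsets. Your side remarks (that admissibility is not needed and that nothing is specific to the two function spaces) are accurate and consistent with the paper's own observation that the same argument works verbatim on $\mathcal{C}_{2\pi}(\mathbb{R})$.
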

\begin{proof}
It is sufficient to show the result for the $\mathcal{I}$-convergence on $\mathcal{C}[0,1]$, the same argument is true on $\mathcal{C}_{2\pi}(\mathbb{R})$.
Indeed, assume that there exists $C>0$ such that
\begin{equation}
    \label{ecuacion}
\|w_n-w\|_\infty\leq C(\|x_n-x\|_\infty+\|y_n-y\|_\infty+\|z_n-z\|_\infty)
\end{equation}
for all $n$,  $x_n\overset{\mathcal{I}}{\longrightarrow}x$, $y_n\overset{\mathcal{I}}{\longrightarrow}y$ and $z_n\overset{\mathcal{I}}{\longrightarrow}z$. We wish to show that $w_n\overset{\mathcal{I}}{\longrightarrow}w$.

Fix $\varepsilon>0$, we wish to show that
$$
A(\varepsilon)=\{n\in\mathbb{N}\,:\, \|w_n-w\|_{\infty}>\varepsilon\}\in\mathcal{I}.
$$
By hypothesis;
$A_1=\{n\in\mathbb{N}\,:\, \|x_n-x\|>\frac{\varepsilon}{3C}\}\in \mathcal{I}$, $A_2\{n\in\mathbb{N}\,:\, \|y_n-y\|>\frac{\varepsilon}{3C}\}\in \mathcal{I}$ and
$A_3=\{n\in\mathbb{N}\,:\, \|z_n-y\|>\frac{\varepsilon}{3C}\}\in \mathcal{I}$.

According to (\ref{ecuacion}) we have that $A(\varepsilon)\subset A_1\cup A_2\cup A_3$, then by applying (2) and (3) in the definition of ideal, we  get that $A(\varepsilon)\in\mathcal{I}$ as  we desired.\end{proof}

 \begin{cor}
 Korovkin's statement continues being true if we incorporate the  convergence by a non-trivial ideal.
 \end{cor}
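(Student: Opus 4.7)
The plan is to recognize that this corollary is essentially a two-line consequence of the machinery already set up: the preceding Korovkin-type theorem reduces the question of $\mathcal{R}$-convergence $L_nf \overset{\mathcal{R}}{\to} f$ to checking the three test functions $1,t,t^2$, provided the summability method $\mathcal{R}$ preserves inequalities (or order-inequalities), and Theorem \ref{ideal} supplies exactly this property for $\mathcal{I}$-convergence.

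First, I would specialise the summability method $\mathcal{R}$ to be $\mathcal{I}$-convergence on $\mathcal{C}[0,1]$ (or on $\mathcal{C}_{2\pi}(\mathbb{R})$, treated analogously). By Theorem \ref{ideal}, this summability method preserves inequalities in the sense of Definition \ref{des1}. Thus the hypothesis of the preceding Korovkin-type theorem is satisfied.

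Then, for any sequence $(L_n)$ of positive linear operators from $\mathcal{C}[0,1]$ into $\mathcal{C}[0,1]$ and any $f\in\mathcal{C}[0,1]$ bounded on $\mathbb{R}$, I would directly invoke the Korovkin-type theorem above to conclude that $L_nf\overset{\mathcal{I}}{\longrightarrow}f$ if and only if $L_n1\overset{\mathcal{I}}{\longrightarrow}1$, $L_nt\overset{\mathcal{I}}{\longrightarrow}t$ and $L_nt^2\overset{\mathcal{I}}{\longrightarrow}t^2$.

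The only place any real work was done is inside Theorem \ref{ideal}, whose proof amounts to the elementary ideal-theoretic observation that the set $A(\varepsilon)$ where $\|w_n-w\|_\infty>\varepsilon$ is contained in the union of three sets in $\mathcal{I}$ (one for each of $x_n,y_n,z_n$ with threshold $\varepsilon/3C$), and hence lies in $\mathcal{I}$ by finite-union closure. Because that step is already discharged, there is no genuine obstacle in the corollary itself; the main conceptual point to highlight is simply that the inequality-preserving property is exactly the right abstraction that lets the classical Korovkin argument transfer verbatim to ideal convergence, and that the result then specialises to previously known instances (e.g.\ statistical convergence via $\mathcal{I}=\mathcal{I}_d$) without any further argument.
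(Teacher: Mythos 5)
Your proposal is correct and follows exactly the route the paper intends: the corollary is an immediate consequence of combining Theorem \ref{ideal} (ideal convergence preserves inequalities) with the general Korovkin-type theorem for inequality-preserving summability methods, which is why the paper states it without a separate proof. Nothing further is needed.
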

Next we analyse some summability methods that preserve order inequalities, thus Korovkin statements continue being true if we incorporate these summability methods.

A sequence $(f_k)$ in a Banach lattice $(X,<)$ endowed with the lattice norm $\|\cdot\|$ is said to be almost convergent to $L\in X$ if the double sequence
$$
\frac{1}{m+1}\sum_{i=0}^mx_{n+i}
$$
converges to $L$ as $m\to\infty$ uniformly in $n$.

\begin{thm}
Let $(X,<)$ be a Banach lattice with the lattice norm $\|\cdot\|$. Let us consider $\mathcal{R}$ the almost summability defined on $\mathcal{D}_{\mathcal{R}}\subset X^{\mathbb{N}}$. Then $\mathcal{R}$ preserves order-inequalities.
\end{thm}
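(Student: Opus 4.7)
The plan is to exploit the fact that the Cesàro average underlying almost convergence is a positive linear operation, so it transforms the pointwise order inequality into an order inequality between the averages, from which the norm bound follows by lattice-norm monotonicity.

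First I would fix sequences $(w_n),(x_n),(y_n),(z_n)\in\mathcal{D}_{\mathcal{R}}$ and $w,x,y,z\in X$ satisfying the hypothesis of Definition \ref{des2}, and set $a_n := (x_n-x)+(y_n-y)+(z_n-z)$, so that $-Ca_n\leq w_n-w\leq Ca_n$ for every $n$. Note that the two-sided inequality automatically forces $a_n\geq 0$ (from $-a_n\leq a_n$ together with $\tfrac12\geq 0$). For $m,n\geq 0$, introduce the Cesàro operator $\sigma_{m,n}(u_\cdot):=\frac{1}{m+1}\sum_{i=0}^m u_{n+i}$, which is linear and positive.

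Applying $\sigma_{m,n}$ term by term to the chain $-Ca_{n+i}\leq w_{n+i}-w\leq Ca_{n+i}$ (for $i=0,\ldots,m$) preserves both inequalities, giving
$$-C\,\sigma_{m,n}(a_\cdot)\ \leq\ \sigma_{m,n}(w_\cdot)-w\ \leq\ C\,\sigma_{m,n}(a_\cdot).$$
Because $\|\cdot\|$ is a lattice norm, this yields $\|\sigma_{m,n}(w_\cdot)-w\|\leq C\,\|\sigma_{m,n}(a_\cdot)\|$, and by linearity of $\sigma_{m,n}$ together with the triangle inequality,
$$\|\sigma_{m,n}(a_\cdot)\|\ \leq\ \|\sigma_{m,n}(x_\cdot)-x\|+\|\sigma_{m,n}(y_\cdot)-y\|+\|\sigma_{m,n}(z_\cdot)-z\|.$$
Since $(x_n),(y_n),(z_n)$ are almost convergent to $x,y,z$, each term on the right tends to $0$ as $m\to\infty$ uniformly in $n$, and therefore so does $\|\sigma_{m,n}(w_\cdot)-w\|$. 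This is exactly the statement that $w_n$ is almost convergent to $w$, establishing that $\mathcal{R}$ preserves order-inequalities.

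The step that requires real care — and the reason one must work with order-inequalities rather than plain norm-inequalities — is the passage from the pointwise bound to a bound on the Cesàro averages. The norm version would demand $\frac{1}{m+1}\sum_{i=0}^m\|x_{n+i}-x\|\to 0$ uniformly in $n$, which is strictly stronger than almost convergence of $(x_n)$; by working inside the lattice order and invoking monotonicity of the lattice norm only at the very end, this gap is bypassed cleanly. Hence the only genuine input one needs beyond positivity of the averaging operator is the lattice-norm property $|u|\leq v\Rightarrow\|u\|\leq\|v\|$.
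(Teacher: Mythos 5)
Your proof is correct and follows essentially the same route as the paper's: both average the two-sided order inequality using the positivity and linearity of the Cesàro means, then pass to norms via lattice-norm monotonicity and the triangle inequality, and conclude by uniformity in $n$. Your write-up merely makes explicit two points the paper leaves implicit (that $a_n=(x_n-x)+(y_n-y)+(z_n-z)\geq 0$, and where exactly the lattice-norm property is invoked), which is a welcome clarification but not a different argument.
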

\begin{proof}
Indeed, assume that there exist sequences $(w_i),(x_i),(y_i)$ and $(z_i)$ satisfying
\begin{align*}
-C\left((x_i-x)+(y_i-y)+(z_i-z)\right) &<w_i-w \\
&< C\left((x_i-x)+(y_i-y)+(z_i-z)\right).
\end{align*}
Thus,
\begin{align*}
\left|\frac{1}{m+1}\sum_{i=0}^mw_{n+i}-w\right|&<C\left|\frac{1}{m+1}\sum_{i=0}^mx_{n+i}-x+\frac{1}{m+1}\sum_{i=0}^my_{n+i}-y\right. \\&+\left.\frac{1}{m+1}\sum_{i=0}^mz_{n+i}-z\right|,
\end{align*}
hence, taking norms
\begin{align*}
\left\|\frac{1}{m+1}\sum_{i=0}^mw_{n+i}-w\right\|&<C\left[\left\|\frac{1}{m+1}\sum_{i=0}^mx_{n+i}-x\right\|+\left\|\frac{1}{m+1}\sum_{i=0}^my_{n+i}-y\right\|\right. \\
&+\left.\left\|\frac{1}{m+1}\sum_{i=0}^mz_{n+i}-z\right\|\right].
\end{align*}
Letting $m\to \infty$ we get
$$
\left\|\frac{1}{m+1}\sum_{i=0}^mw_{n+i}-w\right\|\to 0
$$
uniformly in $n$, therefore $w_n\overset{\mathcal{R}}{\longrightarrow}w$ as we desired.
\end{proof}
\begin{cor}
 Korovkin's statement continues being true if we incorporate the almost summability.
\end{cor}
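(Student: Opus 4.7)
The plan is to obtain this corollary as an immediate consequence of the two results already available in this section: the general Korovkin-type theorem stated at the beginning (which asserts that any summability method $\mathcal{R}$ on $\mathcal{C}[0,1]$ which preserves inequalities or preserves order-inequalities inherits the classical Korovkin approximation property for sequences of positive linear operators), together with the theorem just proved (which shows that on any Banach lattice with a lattice norm the almost summability preserves order-inequalities).

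First I would observe that $\mathcal{C}[0,1]$ endowed with the pointwise order and the supremum norm is a Banach lattice with lattice norm: $|f|(x)=|f(x)|$ is continuous when $f$ is, the pointwise order turns $\mathcal{C}[0,1]$ into a vector lattice, and $\||f|\|_\infty=\|f\|_\infty$, so $\|\cdot\|_\infty$ is indeed a lattice norm. This identification lets me invoke the preceding theorem with $X=\mathcal{C}[0,1]$, obtaining that the almost summability method $\mathcal{R}$ on $\mathcal{C}[0,1]$ preserves order-inequalities.

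The second step is then a direct application of the general Korovkin-type theorem to this particular $\mathcal{R}$. Given a sequence of positive linear operators $L_n:\mathcal{C}[0,1]\to\mathcal{C}[0,1]$ such that $L_n 1\overset{\mathcal{R}}{\to}1$, $L_n t\overset{\mathcal{R}}{\to}t$ and $L_n t^2\overset{\mathcal{R}}{\to}t^2$ in the almost-convergence sense, the theorem delivers $L_n f\overset{\mathcal{R}}{\to}f$ for every $f\in\mathcal{C}[0,1]$, which is exactly what the corollary states. The same argument works verbatim on $\mathcal{C}_{2\pi}(\mathbb{R})$, as remarked in the proof of Theorem~\ref{ideal}.

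Because both ingredients are already in hand, there is essentially no obstacle here. The only point worth mentioning in the written-up version is the verification that $\mathcal{C}[0,1]$ is a Banach lattice with lattice norm so that the previous theorem is applicable; once this is noted, the corollary follows in a single line. The approach highlights the design principle of the whole section: reduce each concrete Korovkin-type statement to checking one of the two abstract preservation properties for the summability method at hand.
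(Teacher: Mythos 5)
Your proposal is correct and follows exactly the route the paper intends: the corollary is an immediate consequence of the preceding theorem (almost summability preserves order-inequalities on any Banach lattice) applied to $\mathcal{C}[0,1]$ with the supremum norm, combined with the general Korovkin-type theorem for summability methods preserving order-inequalities. Your additional remark verifying that $\mathcal{C}[0,1]$ is indeed a Banach lattice with a lattice norm is a sensible piece of bookkeeping that the paper leaves implicit.
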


Let $A=(\alpha_{ij})_{(i,j)\in \mathbb{N}\times\mathbb{N}}$ be a matrix with  non-negative entries.   A sequence $(x_i)$ in a Banach lattice $(X,<)$ is said to be $A$-summable (or $A$-convergent) to $L\in X$, if $\lim_n \sum_j \alpha_{nj} x_j=L$.  
A non-negative matrix $A$ is said to be regular if
\begin{enumerate}
    \item $\sup_n\sum_j\alpha_{nj}<\infty$.
    \item $\lim_n \alpha_{ni}=0$.
    \item $\lim_n\sum_i\alpha_{ni}=1$.
\end{enumerate}

\begin{thm}
Let $(X,<)$ be a Banach lattice with the lattice norm $\|\cdot\|$. Let $A$ be a regular matrix with non-negative entries. The $A$-convergence   preserves order-inequalities.
\end{thm}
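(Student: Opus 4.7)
The plan is to transport the two-sided order inequality
$$-C\bigl[(x_j-x)+(y_j-y)+(z_j-z)\bigr] < w_j-w < C\bigl[(x_j-x)+(y_j-y)+(z_j-z)\bigr]$$
through each row of the matrix $A$, exploiting that the entries $\alpha_{nj}$ are non-negative (so pointwise multiplication and finite summation preserve the lattice order) and that the positive cone of the Banach lattice is norm-closed. Once the inequality is promoted to the level of the row-transforms $\sum_j \alpha_{nj}(\cdot)$, regularity of $A$ together with the hypotheses $x_j \overset{A}{\longrightarrow} x$, $y_j \overset{A}{\longrightarrow} y$, $z_j \overset{A}{\longrightarrow} z$ will drive the resulting upper bound to zero in norm and yield $w_j \overset{A}{\longrightarrow} w$.

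Concretely, I would fix a row index $n$, multiply the displayed inequality by $\alpha_{nj} \geq 0$, sum over $j=1,\dots,N$, and then let $N\to\infty$. This last step is legitimate because $(w_j),(x_j),(y_j),(z_j)\in\mathcal{D}_{\mathcal{R}}$, so every row-series converges in norm, and because the positive cone is norm-closed. The outcome is the row-wise sandwich
$$\Bigl|\sum_j \alpha_{nj}(w_j-w)\Bigr| \leq C\sum_j \alpha_{nj}\bigl[(x_j-x)+(y_j-y)+(z_j-z)\bigr].$$
Because $\|\cdot\|$ is a lattice norm, taking norms of both sides and applying the triangle inequality gives
$$\Bigl\|\sum_j \alpha_{nj}(w_j-w)\Bigr\| \leq C\Bigl(\bigl\|\sum_j \alpha_{nj}(x_j-x)\bigr\| + \bigl\|\sum_j \alpha_{nj}(y_j-y)\bigr\| + \bigl\|\sum_j \alpha_{nj}(z_j-z)\bigr\|\Bigr).$$

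Each factor on the right decomposes as $\sum_j \alpha_{nj}(x_j-x) = \sum_j \alpha_{nj}x_j - x\sum_j \alpha_{nj}$. By $A$-convergence the first piece tends to $x$ in norm, and by regularity condition (3), $\sum_j \alpha_{nj} \to 1$, so the second also tends to $x$; hence $\sum_j \alpha_{nj}(x_j-x) \to 0$ in norm, and analogously for $y$ and $z$. The right-hand side above therefore vanishes as $n\to\infty$, so $\sum_j \alpha_{nj}(w_j-w) \to 0$ in norm. The same decomposition $\sum_j \alpha_{nj} w_j = \sum_j \alpha_{nj}(w_j-w) + w\sum_j \alpha_{nj}$ combined once more with regularity then delivers $\sum_j \alpha_{nj} w_j \to w$, i.e., $w_j \overset{A}{\longrightarrow} w$.

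The main obstacle I expect is the passage from the pointwise order inequality to the same inequality on the infinite row-sums: this step relies on both the non-negativity of the entries $\alpha_{nj}$ and the norm-closedness of the positive cone, and it is exactly where the argument would fail if $A$ had entries of mixed sign. A secondary bookkeeping point, absent from the almost-convergence proof preceding this theorem, is that the row sums $\sum_j \alpha_{nj}$ are not identically $1$; this is precisely where regularity condition (3) must be invoked to absorb the correction term $w\sum_j \alpha_{nj}$, while condition (1) ensures the infinite row sums are well defined throughout the argument.
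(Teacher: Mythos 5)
Your proof is correct and follows essentially the same route as the paper's: multiply the order inequality by the non-negative entries $\alpha_{nj}$, pass to the row transforms, take lattice norms with the triangle inequality, and invoke the regularity condition $\sum_j \alpha_{nj}\to 1$ to absorb the correction terms of the form $\bigl(\sum_j\alpha_{nj}\bigr)e - e$ both for the sandwich sequences and for $w$ at the end. Your explicit justification of the passage from finite to infinite row sums via norm-closedness of the positive cone is a detail the paper leaves implicit, but the argument is the same.
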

\begin{proof}
Indeed, assume that there exist sequences $(w_i),(x_i),(y_i)$ and $(z_i)$ satisfying
\begin{align*}
-C\left((x_i-x)+(y_i-y)+(z_i-z)\right) &<w_i-w< \\
&< C\left((x_i-x)+(y_i-y)+(z_i-z)\right).
\end{align*}
Since $e_n\overset{A}{\longrightarrow}e$, for $e_j=x_j,y_j,z_j$, $e=x,y,z$ and $\lim_n\sum_i\alpha_{ni}=1$ we get:
$$
\left\|\sum_j \alpha_{nj} e_j-\sum_{j}a_{nj}e\right\|\leq 
\left\|\sum_j \alpha_{nj} e_j-e\right\|+\left|1-\sum_{j}a_{nj}\right| \|e\|\to 0
$$
as $n\to\infty$ for $e=x,y,z$.

Since the entries of $A$ are non-negative, for every $n$ we get
\begin{align*}
\left|\sum_j \alpha_{nj} w_j-\left(\sum_j \alpha_{nj}\right)w \right|&\leq C\left| \sum_j \alpha_{nj} x_j-\left(\sum_{j}a_{nj}\right)x\right.\\
&+\sum_j \alpha_{nj} y_j-\left(\sum_{j}a_{nj}\right)y\\
&\left.+\sum_j \alpha_{nj} z_j-\left(\sum_j\alpha_{nj}\right)z\right| .
\end{align*}
Taking norms and applying the triangular inequality we get that
$$
\lim_{n\to \infty }\left\|\sum_j \alpha_{nj} w_j-\left(\sum_{j}\alpha_{nj}\right)w\right\|=0.
$$
Hence, since
$$
\left\|\sum_j \alpha_{nj} w_j-w\right\|\leq 
\left|\sum_j \alpha_{nj} -1\right|\|w\|+\left\|\sum_j \alpha_{nj} w_j-\left(\sum_{j}\alpha_{nj}\right)w\right\|,
$$
and each summand in the right hand  converges in norm to zero as $n$ tends to $\infty$, we get that $w_n\overset{A}{\longrightarrow}w$
as we desired.
\end{proof}
\begin{cor}
 Korovkin's statement remains true if we incorporate a regular matrix summability method with non-negative entries.
\end{cor}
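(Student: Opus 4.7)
The plan is to derive this corollary as an immediate consequence of two earlier results from Section~\ref{seccion3}: the main Korovkin-type theorem stated near the beginning of the section, together with the theorem immediately preceding the corollary, which establishes that $A$-convergence by a regular matrix with non-negative entries preserves order-inequalities on any Banach lattice with lattice norm.

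First I would specialise that theorem to the Banach lattice $X = \mathcal{C}[0,1]$ endowed with the supremum norm. Under the pointwise order, $\mathcal{C}[0,1]$ is a Banach lattice, and $\|\cdot\|_\infty$ is a lattice norm since $|f|\le|g|$ pointwise implies $\|f\|_\infty\le\|g\|_\infty$. Consequently, the $A$-convergence associated to the given regular non-negative matrix preserves order-inequalities on $\mathcal{C}[0,1]$, placing it within the hypotheses of the main Korovkin-type theorem.

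Plugging this into that theorem, with $\mathcal{R}$ taken to be the chosen $A$-convergence, yields the desired conclusion at once: for any sequence $(L_n)$ of positive linear operators from $\mathcal{C}[0,1]$ into itself, one has $L_n f \overset{A}{\longrightarrow} f$ for every $f\in\mathcal{C}[0,1]$ bounded on $\mathbb{R}$ if and only if $L_n 1 \overset{A}{\longrightarrow} 1$, $L_n t \overset{A}{\longrightarrow} t$ and $L_n t^2 \overset{A}{\longrightarrow} t^2$. There is really no substantial obstacle here; the entire argument is absorbed into the preservation property already verified, which is precisely the design rationale for the abstract notions in Definitions~\ref{des1} and~\ref{des2}: once order-inequality preservation has been checked for a given summability method, the corresponding Korovkin-type corollary is automatic.
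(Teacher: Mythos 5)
Your proposal is correct and follows exactly the route the paper intends: the corollary is an immediate combination of the general Korovkin-type theorem for summability methods preserving order-inequalities with the preceding theorem showing that regular non-negative matrix summability preserves order-inequalities on any Banach lattice, specialised to $\mathcal{C}[0,1]$ with the supremum norm. The paper leaves this deduction implicit, and your explicit check that $\mathcal{C}[0,1]$ is a Banach lattice with lattice norm is the only (routine) detail needed.
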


\section{Schur's Lemma for Multiplier series}
\label{seccion4}
Schur's Lemma is a very striking result, so that, it has attracted the interest of many people. One of the classical versions (\cite{dunford}) states that a sequence in $\ell_1$ is weakly convergent if and only if it is norm convergent. A later version  was discovered by Antosik and Swartz using the Basic Matrix Theorem (see \cite{antosikmatrix}), moreover Swartz \cite{swartzmath,booksw} obtained a version of the Schur lemma for bounded multiplier convergent series.

Let   $(X,\|\cdot\|)$ be a real Banach space. Let us denote by $\ell_\infty(X)$ the space of all bounded sequences in $X$ provided with the supremum norm (which we will denote sometimes abusively by $\|\cdot\|$):
 $$
 \|(x_k)\|_{\ell_\infty(X)}=\sup \{\|x_n\|\,\,:\,\,n\in\mathbb{N}\}.
 $$
 
It is well known that a series $\sum_ix_i$ is (wuc) if and only if
$\sum_ia_ix_i$ is convergent for every sequence $(a_i)\in c_0$, or equivalently $\{\sum_{i=1}^na_ix_i\,:\, (a_i)\in B_{\ell_\infty},n\in\mathbb{N}\}$ is bounded in the Banach space $X$. It is also known that a series  $\sum_i x_i$ is (uc) if and only if $\sum_ia_ix_i$ is convergent for every $(a_i)\in \ell_\infty$ (see \cite{diestel}[Chapter V]).

Let us denote by $X(c_0)$ the (wuc) series and  $X(\ell_{\infty})$ will denote the space of all (uc) series. Both spaces are real Banach spaces, endowed with the norm:
\begin{equation}
    \label{norma}
    \|(x_k)_{k\in \mathbb{N}}\|_s=\sup\left\{ \left\| \sum_{i=1}^n a_ix_i\right\|\,\,:\,\,|a_i|\leq 1, i\in\{1,\cdots, n\}, n\in \mathbb{N}   \right\}.
\end{equation}
The following  result  by Swartz  \cite{swartzmath}  is a striking version of the Schur lemma 
for bounded multiplier convergent series:
\begin{thm}[Swartz-1983]
\label{swartz}
Let $(x_n)_{n\in\mathbb{N}}=(x_n(k))$ be a sequence in $X(\ell_\infty)$ such that for every $(a(k))\in \ell_\infty$, $\lim_{n\to\infty}\sum_{k=1}^\infty a(k)x_n(k)$ exists. Then, there exists $x_0\in X(\ell_\infty)$ such that $\lim_{n\to\infty}\|x_n-x_0\|_s=0$.
\end{thm}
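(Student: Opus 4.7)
The plan is to reinterpret each $x_n = (x_n(k))_k \in X(\ell_\infty)$ as the continuous linear operator $T_n \colon \ell_\infty \to X$ defined by $T_n(a) := \sum_k a(k) x_n(k)$. Since the series $\sum_k x_n(k)$ is (uc), this operator is well-defined, and the normalization \eqref{norma} forces $\|T_n\| = \|x_n\|_s$. The hypothesis says exactly that $(T_n(a))_n$ is norm-convergent in $X$ for every $a \in \ell_\infty$. By the uniform boundedness principle, $M := \sup_n \|T_n\| = \sup_n \|x_n\|_s < \infty$, and the pointwise limit $T(a) := \lim_n T_n(a)$ is a bounded linear operator with $\|T\| \leq M$.

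I would then introduce the natural candidate limit $x_0(k) := T(e_k)$, with $e_k$ the $k$-th unit vector of $\ell_\infty$. Linearity and continuity of $T$ on finite sums yield, for every $N$ and every choice of scalars $|a(k)| \leq 1$, $\bigl\| \sum_{k=1}^N a(k) x_0(k) \bigr\| = \bigl\| T\bigl(\sum_{k=1}^N a(k) e_k\bigr) \bigr\| \leq M$, so $(x_0(k))$ is at least a (wuc) series in $X$ with $\|x_0\|_s \leq M$.

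The heart of the argument, and in my view the main obstacle, is to upgrade the pointwise convergence $T_n(a) \to T(a)$ to convergence in operator norm, equivalently $\|x_n - x_0\|_s \to 0$. I would argue by contradiction: if the conclusion fails, there exist $\delta > 0$, a subsequence (still written $x_n$), integers $N_n$, and scalars $a^{(n)} \in B_{\ell_\infty}$ with $\bigl\| \sum_{k=1}^{N_n} a^{(n)}(k)(x_n(k) - x_0(k)) \bigr\| > \delta$. Using the (uc) of each $x_n$ and of $x_0$ to control tails uniformly in $|a|\le 1$, a gliding hump procedure extracts a thinned subsequence together with disjoint finite blocks $I_n \subset \mathbb{N}$ for which the matrix $m_{ij} := \sum_{k \in I_j} a^{(i)}(k)(x_i(k) - x_0(k))$ has diagonal entries of norm $> \delta/2$, each column satisfies $\lim_i m_{ij} = 0$ (from pointwise convergence $T_i \to T$ applied to the bounded scalar sequence $\chi_{I_j} a^{(i)}$), and the rows admit subseries convergence. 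The Antosik--Mikusi\'nski Basic Matrix Theorem then forces $m_{ii} \to 0$, contradicting the diagonal lower bound. The technical difficulty lies in the combinatorial construction of the blocks and the subsequence so that the matrix truly satisfies the hypotheses of that theorem; this is precisely where the (uc) assumption, rather than the weaker (wuc), becomes indispensable.

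Once $\|x_n - x_0\|_s \to 0$ is established, $(x_n)$ is Cauchy in the Banach space $(X(\ell_\infty), \|\cdot\|_s)$; its limit there agrees coordinate-wise with $x_0$, so $x_0 \in X(\ell_\infty)$ and the theorem follows.
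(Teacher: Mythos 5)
The paper does not actually prove Theorem \ref{swartz}: it is quoted from Swartz's 1983 paper \cite{swartzmath}, and the only indication of method given is the remark that Antosik and Swartz obtained such results via the Basic Matrix Theorem. Your outline --- uniform boundedness to get $M:=\sup_n\|x_n\|_s<\infty$, the candidate limit $x_0(k):=T(e_k)$, and a gliding-hump reduction to the Antosik--Mikusi\'{n}ski theorem --- is precisely that classical route, so there is no difference of approach to report; the question is only whether the sketch closes.

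It does not, for two reasons. First, the step you yourself flag as ``the technical difficulty'' --- extracting the thinned subsequence and the disjoint blocks $I_n$ so that the matrix $(m_{ij})$ genuinely satisfies the hypotheses of the Basic Matrix Theorem --- is the entire content of Swartz's proof; as written your argument is a plan rather than a proof. Second, and more concretely, your matrix is built from $x_i-x_0$, but at that stage you only know that $x_0$ is (wuc), not (uc). The row condition of the Basic Matrix Theorem asks, for multipliers $b\in B_{\ell_\infty}$ supported on a union of blocks, that $\sum_k b(k)\left(x_i(k)-x_0(k)\right)$ converge and that the resulting sequence in $i$ be convergent; both demands require $\sum_k b(k)x_0(k)$ to exist, i.e.\ exactly the membership $x_0\in X(\ell_\infty)$ that you postpone to the final paragraph. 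The same circularity affects your ``control tails uniformly in $|a|\le 1$'' step, which explicitly invokes the (uc) of $x_0$. The standard repair is to run the gliding hump on differences $x_p-x_q$ (both of which lie in $X(\ell_\infty)$) to show that $(x_n)$ is $\|\cdot\|_s$-Cauchy, and only afterwards identify the limit and its membership in $X(\ell_\infty)$ by completeness (compare Theorem \ref{main1} with the ordinary convergence and $S=\ell_\infty$). A minor further point: your justification of the column condition $\lim_i m_{ij}=0$ by ``pointwise convergence applied to $\chi_{I_j}a^{(i)}$'' is not literally valid, since that multiplier varies with the row index $i$; the correct (and easier) justification is that $I_j$ is finite and $x_i(k)=T_i(e_k)\to T(e_k)=x_0(k)$ for each fixed $k$, with $|a^{(i)}(k)|\le 1$.
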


More effort is needed to obtain Theorem \ref{swartz} for general summability methods.
Our work extends  Swartz's result in two directions: 1) To know which properties should satisfy a summability method to establish the Schur Lemma for multiplier series, and 2) to extend the result  for subspaces of $\ell_\infty $, containing $c_0$ which are $\ell_\infty$-Grothendieck (see \cite{nuevo} for new results on this property).

Let   $S$ be a closed subspace of $\ell_\infty$ containing $c_0$. Here $\mathcal{R}\,:D_{\mathcal{R}}\subset X^{\mathbb{N}}\to X$  defines a summability method defined on a real Banach space $X$. Let us consider the following vector space:
$$
X(S,\mathcal{R})=\left\{(x_k)_{k\in\mathbb{N}}\,\, :\, \mathcal{R}\left(\left(\sum_{k=1}^n a_kx_k\right)\right) \,\,\textrm{exists for every } (a_k)_{k\in \mathbb{N}}\in S\right\}.
$$

To establish our result we show up some properties on a summability method, that have not been  treated and that deserve subsequent studies.

 \begin{description}
 \item[(h1)]{\bf Regularity.}
     \item[(h2)] {\bf $\mathcal{R}$-weak convergence.} For every $(x_n)\in D_{\mathcal{R}}\cap \ell_\infty(X)$ such that $\mathcal{R}((x_n))=L$  it is satisfied that  $\sup_{f\in B_{X^*}}|f(x_n)-f(L)|=0$.
     \item[(h3)]{\bf Boundedness.} In the following sense, there exists $M>0$ such that $\|\mathcal{R}((x_n))\|\leq M \|(x_n)\|_{\ell_\infty(X)}$ for all $(x_n)\in D_\mathcal{R}\cap \ell_{\infty}(X)$.
     \item[(h4)] $\mathcal{R}$-{\bf completeness}. That is, a sequence $(x_n)\in D_\mathcal{R}\cap \ell_\infty(X)$ if and only if $(x_n)$ is $\mathcal{R}$-Cauchy. That is, for any $\varepsilon>0$ there exists $n_0\in \mathbb{N}$, such that, $(x_n-x_{n_0})_{n\geq n_0}\in D_\mathcal{R}\cap \ell_{\infty}(X)$ and  $\|\mathcal{R}((x_n-x_{n_0})_{ n\geq n_0}))\|<\varepsilon$.
    
 \end{description}

Hypothesis {\bf (h3)} will guarantee that $X(S,\mathcal{R})$ is a closed subspace of $X(c_0)$ endowed with the norm $\|\cdot\|_s$, this is one of our first results. We include the proof to see the use of properties in action.

 \begin{thm}
 \label{main1}
 Let $\mathcal{R}$ be a convergence method on a Banach space $X$ satisfying {\bf (h3)}. Then $X(S,\mathcal{R})$ is a closed subspace of $X(c_0)$ endowed with the norm $\|\cdot\|_s$.
 \end{thm}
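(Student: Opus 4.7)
The plan is to verify three things in order: that $X(S,\mathcal{R})$ is a linear subspace of $X^{\mathbb{N}}$; that it sits inside $X(c_0)$, so the norm $\|\cdot\|_s$ is actually finite on it; and that it is norm-closed in the Banach space $(X(c_0),\|\cdot\|_s)$. The content of the theorem is in the last two steps, and in both of them hypothesis \textbf{(h3)} is what lets the $\mathcal{R}$-limit be controlled by the supremum norm.

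Linearity is immediate from the linearity of $\mathcal{R}$ and of the partial-sum operator: if $(x_k),(y_k)\in X(S,\mathcal{R})$, then for each $(a_k)\in S$ the sequence $\bigl(\sum_{k=1}^{n}a_k(\lambda x_k+\mu y_k)\bigr)_n$ is a linear combination of two $\mathcal{R}$-summable sequences. For the inclusion $X(S,\mathcal{R})\subseteq X(c_0)$, fix $(x_k)\in X(S,\mathcal{R})$ and use that $c_0\subseteq S$. By the classical characterization of (wuc) it suffices to show that $\sup_n\bigl\|\sum_{k=1}^{n}a_k x_k\bigr\|<\infty$ for each $(a_k)\in B_{\ell_\infty}$. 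I would consider the linear map $T\colon S\to X$ defined by $T(a)=\mathcal{R}\bigl(\bigl(\sum_{k=1}^{n}a_k x_k\bigr)_n\bigr)$, and use \textbf{(h3)} together with a closed-graph or gliding-hump argument on scalar sequences in $c_0$ to rule out unbounded partial sums, the constant $M$ of \textbf{(h3)} feeding directly into the resulting bound $\|(x_k)\|_s<\infty$.

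For closedness, take $(x_k^{(n)})_n\subset X(S,\mathcal{R})$ with $\|(x_k^{(n)})-(x_k)\|_s\to 0$ for some $(x_k)\in X(c_0)$. Fix $(a_k)\in S\subseteq\ell_\infty$ and set $y_m^{(n)}=\sum_{k=1}^{m}a_k x_k^{(n)}$, $y_m=\sum_{k=1}^{m}a_k x_k$. Directly from the definition of $\|\cdot\|_s$,
\[
\sup_{m}\|y_m-y_m^{(n)}\|\;\leq\;\|(a_k)\|_\infty\,\|(x_k)-(x_k^{(n)})\|_s\;\xrightarrow[n\to\infty]{}\;0.
\]
Writing $L_n=\mathcal{R}((y_m^{(n)})_m)$, hypothesis \textbf{(h3)} applied to $(y_m^{(n)}-y_m^{(n')})_m$ gives $\|L_n-L_{n'}\|\leq M\sup_m\|y_m^{(n)}-y_m^{(n')}\|$, so $(L_n)$ is Cauchy in $X$ and converges to some $L$. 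The remaining step is to identify $\mathcal{R}((y_m)_m)$ with $L$, which gives both $(y_m)_m\in D_\mathcal{R}$ and the correct value.

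The main obstacle is exactly this last identification: \textbf{(h3)} provides the estimate $\|\mathcal{R}((y_m-y_m^{(n)})_m)\|\leq M\sup_m\|y_m-y_m^{(n)}\|$ only \emph{conditional on} $(y_m-y_m^{(n)})_m$ belonging to $D_\mathcal{R}$, so the delicate part is to transfer $\mathcal{R}$-summability from $(y_m^{(n)})_m$ to $(y_m)_m$. I would handle this by decomposing $(y_m)_m=(y_m^{(n)})_m+(y_m-y_m^{(n)})_m$ for a fixed large $n$, exploiting that the second summand has arbitrarily small supremum norm by the display above, and using \textbf{(h3)} to push the $\mathcal{R}$-limit through this uniform approximation; this is the one point where \textbf{(h3)} is used decisively rather than formally, and where, if the transfer failed, one would in practice need to strengthen the hypothesis (e.g.\ to \textbf{(h4)}).
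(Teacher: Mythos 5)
Your argument for closedness is essentially the paper's: fix $(a_k)\in S$, set $y_m^{(n)}=\sum_{k=1}^m a_k x_k^{(n)}$, use \textbf{(h3)} to show that the limits $L_n=\mathcal{R}\bigl((y_m^{(n)})_m\bigr)$ form a Cauchy sequence in $X$ converging to some $L$, and then identify $\mathcal{R}\bigl((y_m)_m\bigr)$ with $L$. The paper does exactly this (with $x^p$ in place of your $x^{(n)}$) and does not treat the preliminary points you raise: it verifies neither linearity nor the inclusion $X(S,\mathcal{R})\subseteq X(c_0)$, taking both for granted, so your second step (closed graph or gliding hump to get $\|(x_k)\|_s<\infty$) is extra care rather than a deviation.

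The one step you leave open --- transferring membership of $(y_m)_m$ to $D_{\mathcal{R}}$ so that \textbf{(h3)} can legitimately be applied to the difference $(y_m-y_m^{(n)})_m$ --- is a real issue, and you are right that \textbf{(h3)} only yields the norm estimate \emph{conditional on} domain membership. Be aware, though, that the paper's own proof passes over exactly this point: in its final displays it writes
\[
\left\|\mathcal{R}\Bigl(\Bigl(\sum_{k=1}^n a_kx_k^p\Bigr)\Bigr)-\mathcal{R}\Bigl(\Bigl(\sum_{k=1}^na_k x_k^0\Bigr)\Bigr)\right\|\leq M\,\|x^p-x^0\|_s,
\]
which presupposes that $\bigl(\sum_{k=1}^n a_kx_k^0\bigr)_n$ already lies in $D_{\mathcal{R}}$ --- precisely what is being proved. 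Your proposed decomposition $(y_m)_m=(y_m^{(n)})_m+(y_m-y_m^{(n)})_m$ does not close this gap by itself either, since the second summand, though of arbitrarily small supremum norm, need not belong to $D_{\mathcal{R}}$; as you suspect, one really needs something like \textbf{(h4)} (or a standing assumption that $D_{\mathcal{R}}$ is stable under uniformly small perturbations) to finish. So your proposal is faithful to the paper's route, and it is, if anything, more candid about where \textbf{(h3)} stops doing work.
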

\begin{proof}
 Let $(x^n)\in X^{\mathbb{N}}(S,\mathcal{R})$ satisfying $\lim_{n\to\infty} \|x^n-x^0\|_s=0$ for some $x^0=(x_i^0)\in X(c_0)$ and let us show that $x^0\in X(S,\mathcal{R})$, that is, for all $(a_k)\in S$ we have that $\sum_{k=1}^na_kx_k^0$ is $\mathcal{R}$-convergent.

By hypothesis, $\mathcal{R}$ satisfies {\bf (h3)}, therefore there exists $M>0$ such that
$$
\|\mathcal{R}((x_k))-\mathcal{R}((y_k))\|\leq M \|(x_k-y_k)\|_{\ell_{\infty}(X)}.
$$
for all $(x_n),(y_n)\in \ell_{\infty}(X)\cap D_\mathcal{R}$.

Since $(x^n)$ is a Cauchy sequence, for each $\varepsilon>0$ there exists $k_0$, such that, for all $p,q\geq k_0$, $\|x^p-x^q\|_s<\varepsilon/M$.

 Let us fix $(a_k)$ in the unit ball of $S$. 
 Since $x^m\in X(S,\mathcal{R})$, we obtain that 
 the partial sums $\sum_{k=1}^n a_kx_k^m$ are 
 $\mathcal{R}$-convergent to some $y_m\in X$. 
 Then, for $p,q\geq k_0$, by applying {\bf (h3)} we get
\begin{align*}
\|y_p-y_q\|&=\left\|\mathcal{R}\left(\left(\sum_{k=1}^n a_kx_k^q\right)\right)-\mathcal{R}\left(\left(\sum_{k=1}^n a_kx_k^p\right)\right)\right\|\\
& \leq M \|x^p-x^p\|_{s}\leq \varepsilon.
\end{align*}
 Thus $(y_m)$ is a Cauchy sequence. Since $X$ is complete, let $y_0$ be its limit. We claim that $\mathcal{R}\left(\sum_{k=1}^n a_kx_k^0\right)=y_0$.
 Indeed, for any $\varepsilon>0$ there exists $p$ such that $\|y_p-y_0\|\leq \frac{\varepsilon}{2}$ and $\|x^p-x^0\|_s\leq \frac{\varepsilon}{2M}$.
Since $\mathcal{R}$ satisfies  {\bf (h3)}:
 $$
 \left\|\mathcal{R}\left(\left(\sum_{k=1}^n a_kx_k^p\right)\right)-\mathcal{R}\left(\left(\sum_{k=1}^na_k x_k^0\right)\right)\right\|\leq M \|x^p-x^0\|_s\leq 
 \frac{\varepsilon}{2}.
 $$
 Hence,
 \begin{align*}
      \left\|\mathcal{R}\left(\left(\sum_{k=1}^n a_kx_k^0\right)\right)-y_0\right\|& =
       \left\|\mathcal{R}\left(\left(\sum_{k=1}^n a_kx_k^0\right)\right)-y_p+y_p-y_0\right\|\\
       &\leq \left\|\mathcal{R}\left(\left(\sum_{k=1}^n a_kx_k^p\right)\right)-\mathcal{R}\left(\left(\sum_{k=1}^na_k x_k^0\right)\right)\right\|+\|y_p-y_0\|\\
       &\leq \frac{\varepsilon}{2}+\frac{\varepsilon}{2}=\varepsilon.
 \end{align*}
 Since $\varepsilon$ was arbitrary, we obtain that $\mathcal{R}\left(\sum_{k=1}^n a_kx_k^0\right)=y_0$ as we desired.
\end{proof}

One of the keys to the proof of Theorem \ref{main} is to ensure certain operators in wich definition  $R$ is involved, are bounded;  we can guarantee this condition thanks  to the hypothesis {\bf (h3)}.

\begin{lem}
\label{lemaprevio}
Let $X$ be a Banach space and let $\mathcal{R}$ be a convergence method satisfying {\bf (h3)}. For each closed subspace $S$, satisfying $c_0\subset S\subset \ell_\infty$ and  $x=(x_k)\in X(S,\mathcal{R})$ the linear operator
$\Sigma_x\,:\, S\longrightarrow X$, defined by
$$\Sigma_x((a_k)_k)=\mathcal{R}\left(\left(\sum_{k=1}^na_kx_k\right)\right),$$
is bounded.
\end{lem}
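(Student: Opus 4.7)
The plan is to verify linearity and boundedness of $\Sigma_x$ separately, with boundedness coming directly from hypothesis \textbf{(h3)}. Linearity is immediate: the assignment $(a_k) \mapsto \left(\sum_{k=1}^n a_k x_k\right)_n$ is clearly linear in $(a_k)$, and $\mathcal{R}$ is linear on its domain, so $\Sigma_x$ inherits linearity from $\mathcal{R}$.

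For boundedness, I would first fix $(a_k)\in S$ with $\|(a_k)\|_\infty \leq 1$ and set $y_n := \sum_{k=1}^n a_k x_k$. By definition of $X(S,\mathcal{R})$ the sequence $(y_n)$ lies in $D_\mathcal{R}$. Moreover, since $(x_k)$ belongs to $X(c_0)$ (the ambient Banach space endowed with $\|\cdot\|_s$ in which $X(S,\mathcal{R})$ is embedded by Theorem \ref{main1}), the very definition of the norm $\|\cdot\|_s$ in (\ref{norma}) forces $\sup_n \|y_n\|\leq \|(x_k)\|_s < \infty$. Thus $(y_n)\in D_\mathcal{R}\cap \ell_\infty(X)$, and hypothesis \textbf{(h3)} can be applied to give
$$\|\Sigma_x((a_k))\|=\|\mathcal{R}((y_n))\|\leq M\,\|(y_n)\|_{\ell_\infty(X)}\leq M\,\|(x_k)\|_s.$$
Homogeneity extends this to $\|\Sigma_x((a_k))\|\leq M\,\|(x_k)\|_s\,\|(a_k)\|_\infty$ for every $(a_k)\in S$, so $\Sigma_x$ is bounded with $\|\Sigma_x\|\leq M\,\|(x_k)\|_s$.

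The main (rather minor) obstacle is guaranteeing at the outset that $(x_k)\in X(c_0)$, i.e.\ that $\|(x_k)\|_s < \infty$; without this control one could not even assert that $(y_n)\in \ell_\infty(X)$, and hypothesis \textbf{(h3)} could not be invoked. Here this is provided by the embedding $X(S,\mathcal{R})\subset X(c_0)$ already used in Theorem \ref{main1}. Conceptually, \textbf{(h3)} is functioning as a uniform-continuity condition on $\mathcal{R}$ relative to the $\ell_\infty(X)$ norm, and this is precisely what turns a bounded family of partial sums into a bounded family of $\mathcal{R}$-limits, yielding the desired operator norm bound in a single application.
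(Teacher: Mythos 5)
Your argument is correct, and it is the natural one: the paper itself states this lemma without proof (deferring to the reference \cite{schur}), but the remark preceding it --- that \textbf{(h3)} is what makes the relevant operators bounded --- confirms that the direct estimate $\|\mathcal{R}((y_n))\|\leq M\sup_n\|y_n\|\leq M\|x\|_s\|(a_k)\|_\infty$ is exactly the intended mechanism. You are also right to single out the one non-trivial prerequisite, namely that $x\in X(S,\mathcal{R})$ forces $\|x\|_s<\infty$ so that the partial sums $(y_n)$ lie in $\ell_\infty(X)$ and \textbf{(h3)} is applicable at all; without that, the definition of $X(S,\mathcal{R})$ alone gives only $(y_n)\in D_{\mathcal{R}}$, not boundedness. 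Appealing to Theorem \ref{main1} for the inclusion $X(S,\mathcal{R})\subset X(c_0)$ is consistent with the paper as written, though you should be aware that the proof given there only establishes closedness and takes the containment for granted; if you want the lemma to be fully self-contained you would either add the hypothesis $x\in X(c_0)$ explicitly (which is how the lemma is used in Theorem \ref{main}) or supply a separate argument for the containment.
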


A  subspace $M$ of the dual $X^{**}$ of a real Banach space $X$ is called a $M$-Grothendieck space if every sequence in $X^*$ which is $\sigma(X^*,X)$ convergent is also $\sigma(X^*, M)$ convergent. In particular, $X$ is said to be Grothendieck if it is $X^{**}$-Grothendieck, that is, 
every weakly-$*$ convergent sequence in the dual space $X^*$ converges with respect to the weak topology of $X^*$. 

There are many summability methods  which satisfy {\bf (h3)} in Theorem \ref{main}. For instance, of course the usual convergence,  the statistical convergence, lacunary statistical convergence, the uniform almost convergence, any regular bounded matrix summability method, etc.

Next we state the main result in \cite{schur}. It is surprising how this result unifies all known results and it is applied to  most summability methods.
\begin{thm}
\label{main}
Let $X$ be a real Banach space, and let $\mathcal{R}$ be a summability method  satisfying {\bf (h2),(h3)}. Let $(x^n)$ be a sequence in $X(c_0)$. Let $S$ be  a closed
subspace of $\ell_\infty$ containing $c_0$ and assume that $S$
  is $\ell_\infty$-Grothendieck. If for each $(a_k)\in S$, the sequence
$y_n=\sum_{k=1}^\infty a_k x_k^n$ $\mathcal{R}$-converges, then $(x^n)$ converges in $X(c_0)$. 
\end{thm}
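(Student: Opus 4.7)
My plan is to study the sequence of multiplier operators $T_n\colon S\to X$ associated with each $x^n$, translate the pointwise $\mathcal{R}$-convergence hypothesis into $\sigma(S^*,S)$-convergence of their adjoints, and then use the $\ell_\infty$-Grothendieck property of $S$ together with Schur's property of $\ell_1$ to promote this to norm convergence of the corresponding $\ell_1$-representatives. By Lemma~\ref{lemaprevio}, each $T_n(a):=\mathcal{R}\bigl((\sum_{k=1}^N a_k x_k^n)_N\bigr)$ defines a bounded linear map $S\to X$. Because the partial sums $\sum_{k=1}^N a_k x_k^n$ form a bounded set in $X$ (the series is wuc and $a\in\ell_\infty$), hypothesis \textbf{(h2)} identifies the $\mathcal{R}$-limit with the weak limit of those partial sums; pairing with $f\in X^*$ then yields $f(T_n(a))=\sum_k a_k f(x_k^n)$, so the adjoint $T_n^*f\in S^*$ is represented by the $\ell_1$-sequence $c^{n,f}:=(f(x_k^n))_k$, with $\|c^{n,f}\|_1\le \|x^n\|_s\,\|f\|$. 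The hypothesis that $(T_n(a))_n$ is $\mathcal{R}$-convergent for each $a\in S$, combined with a uniform boundedness step (using \textbf{(h3)}) and a second application of \textbf{(h2)} to $(T_n(a))_n$ itself, produces a weak limit $T(a)\in X$; the coordinates $x_k^0:=T(e_k)$ will be the candidate limit $x^0\in X(c_0)$.

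For every $f\in X^*$ the sequence $(T_n^*f)_n\subset \ell_1\subset S^*$ is then $\sigma(S^*,S)$-convergent to $T^*f$, represented by $c^{0,f}:=(f(x_k^0))_k$. The $\ell_\infty$-Grothendieck property of $S$ upgrades this weak-$*$ convergence to $\sigma(\ell_1,\ell_\infty)$-convergence, i.e., weak convergence in $\ell_1$; Schur's property of $\ell_1$ then delivers the norm convergence $\|c^{n,f}-c^{0,f}\|_1\to 0$ for every fixed $f\in X^*$.

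Since
\[
\|x^n-x^0\|_s \;=\; \sup_{f\in B_{X^*}}\sum_{k=1}^\infty|f(x_k^n)-f(x_k^0)| \;=\; \sup_{f\in B_{X^*}}\|c^{n,f}-c^{0,f}\|_1,
\]
the remaining task is to promote this pointwise convergence in $f$ to convergence uniform over $B_{X^*}$, and this is the main obstacle I anticipate. I would attack it by contradiction with a sliding-hump / Antosik--Mikusi\'nski-style extraction: negating uniformity produces subsequences $(n_i)$, functionals $f_i\in B_{X^*}$, and pairwise disjoint finite blocks of coordinates $I_i\subset\mathbb{N}$ with $\sum_{k\in I_i}|f_i(x_k^{n_i}-x_k^0)|>\varepsilon$; setting $\tilde a_k:=\mathrm{sgn}(f_i(x_k^{n_i}-x_k^0))$ on $I_i$ and $\tilde a_k:=0$ elsewhere produces $\tilde a\in\ell_\infty$ which, after suitable care to secure membership in the closed space $S$ (exploiting that $S$ is $\ell_\infty$-Grothendieck and contains $c_0$), gives a scalar sequence in $S$ for which the multiplier sequence $(\sum_k\tilde a_k x_k^n)_n$ fails to be $\mathcal{R}$-Cauchy, contradicting the hypothesis and closing the argument.
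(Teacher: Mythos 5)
The survey states Theorem \ref{main} without proof (it is imported from \cite{schur}), so there is no in-text argument to compare against; judged against the framework the paper sets up, your first two stages are exactly the intended ones: Lemma \ref{lemaprevio} and \textbf{(h3)} give bounded multiplier operators $T_n\colon S\to X$, \textbf{(h2)} identifies their values with weak limits of partial sums so that $T_n^*f$ is represented by the $\ell_1$-sequence $(f(x_k^n))_k$, and the $\ell_\infty$-Grothendieck property of $S$ together with the Schur property of $\ell_1$ upgrades $\sigma(S^*,S)$-convergence to $\|c^{n,f}-c^{0,f}\|_1\to 0$ for each fixed $f$. One loose end already here: \textbf{(h2)} applies only to sequences in $D_{\mathcal R}\cap\ell_\infty(X)$, and your ``uniform boundedness step using \textbf{(h3)}'' is circular, since \textbf{(h3)} bounds the $\mathcal R$-limit by the sup-norm of the sequence and not conversely; the boundedness of $(y_n)_n$ needed to invoke \textbf{(h2)} must be secured independently.

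The genuine gap is the step you yourself flag as the main obstacle, and the sliding-hump sketch you offer for it does not go through. First, the pairwise disjoint blocks $I_i$ cannot be extracted as described: to push the ``hump'' of $x^{n_i}-x^0$ past the coordinates already used you would need either $\sup_{f\in B_{X^*}}\sum_{k>K}|f(x_k^{n}-x_k^0)|\to 0$ as $K\to\infty$, which for a merely weakly unconditionally Cauchy series is false (it is equivalent to unconditional convergence, which is why Swartz's original argument works in $X(\ell_\infty)$ but not in $X(c_0)$), or norm convergence of the individual coordinates $x_k^n\to x_k^0$, which at this stage is known only weakly and is itself part of the conclusion being proved. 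Second, the sign sequence $\tilde a$ you construct lies in $\ell_\infty$ but there is no reason it lies in $S$: the $\ell_\infty$-Grothendieck property transfers convergence statements about functionals from $S$ up to $\ell_\infty$, it does not place any particular element of $\ell_\infty$ inside $S$ (Haydon's $C(\mathcal F)$ is a proper subspace omitting most $\pm1$ sequences), so $\tilde a$ cannot be fed back into the hypothesis. The contradiction has to be manufactured from multipliers one is entitled to use --- finitely supported ones in $c_0\subset S$ --- combined with the $\ell_1$-norm convergence already obtained, which is where an Antosik--Mikusi\'nski basic-matrix-theorem argument, rather than a naive gliding hump, is required; that is precisely the portion of the proof in \cite{schur} that your proposal defers.
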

\begin{rem}
 Let $S$ be a subspace of $\ell_\infty$ containing $c_0$. We consider the inclusion map $\iota\,:c_0\to S$ and the corresponding bidual map which is an isometry from $c_0^{**}=\ell_\infty$ into $S^{**}$. It is natural  to characterize the subspaces $S$, $c_0\subset S\subset \ell_\infty$  with the property $\ell_\infty$-Grothendieck. 
Theorem \ref{main} is true for $S=\ell_\infty$ but it continues being true for every subspace $S\subset \ell_\infty$ which is $\ell_\infty$-Grothendieck.

There are non-trivial subspaces of $\ell_\infty$ which are $\ell_\infty$-Grothendieck. As it was noted in \cite{aizpuruarmariopachecoperez} Remark 4.1, Haydon constructed, using transfinite induction, a Boolean Algebra $\mathcal{F}$ containing the sets $\{\{i\} \,: \, i \in \mathbb{N}\} $ whose corresponding space $ C (\mathcal{F}) $ can be seen as a proper subspace of  $\ell_\infty$ , it contains $c_0 $ and it is also Grothendieck.
We refer to the interested reader to the paper (\cite{nuevo}) where we analyze the property $\ell_\infty$-Grothendieck and we obtain  natural examples of such subspaces of $\ell_\infty$.
\end{rem}

\section{Concluding remarks and some questions}
\label{seccion5}

Regarding the results of Seccion \ref{seccion2}, we know that it is not possible to drop the hypotesis of regularity on Theorem \ref{teormain}. However, we do not know if the hypothesis on regularity is also necessary, these are our first questions:

\begin{quote}
    {\bf Question. }Is the hypothesis of regularity necessary in  Theorem \ref{teormain}?
\end{quote}
More specifically. Let $A$ be a matrix summability method, not necessarily regular
\begin{quote}
    {\bf Question.} Is it possible to obtain a version of Orlicz-Pettis Theorem for the matrix summability induced by $A$?
\end{quote}

Analogously, in the search for new Korovkin type results, it would be interesting to obtain Korovkin’s statement for a summability method that does not preserve inequalities or order-inequalities. That is, the following question arises:

\begin{quote}
{\bf Question. } Let $A$ be a matrix defining a non-necessarily regular summability method. Is it possible a Korovkin-type result incorporating the $A$-summability? 
In general, for which matrices $A$ is   a Korovkin-type result possible?
\end{quote}

Finally, the results of the section \ref{seccion4} suggest that it should be interesting to obtain more information about $\ell_\infty$-Grothendieck spaces. 

\begin{quote}
{\bf Question. } Is it possible to find an example of and $\ell_\infty$-Grothendieck subspace which is not a Grothendieck space? 
\end{quote}

To study the above question , we suggest that we would need a good characterization of $\ell_\infty$-Grothendieck subspaces, which we do not have yet.


\begin{thebibliography}{10}

\bibitem{aizpuruarmariopachecoperez}
A.~Aizpuru, R.~Armario, F.~J. Garc\'{\i}a-Pacheco, and F.~J.
  P\'{e}rez-Fern\'{a}ndez.
\newblock Banach limits and uniform almost summability.
\newblock {\em J. Math. Anal. Appl.}, 379(1):82--90, 2011.

\bibitem{AizpuruDavila2006}
A.~Aizpuru, A.~Guti\'errez-D\'avila, and A.~Sala.
\newblock Unconditionally {C}auchy series and {C}es\`aro summability.
\newblock {\em J. Math. Anal. Appl.}, 324(1):39--48, 2006.

\bibitem{aizpurulistanrambla}
A.~Aizpuru, M.~C. List\'{a}n-Garc\'{i}a, and F.~Rambla-Barreno.
\newblock Density by moduli and statistical convergence.
\newblock {\em Quaest. Math.}, 37(4):525--530, 2014.

\bibitem{a1}
A.~Aizpuru, M.~Nicasio-Llach, and F.~Rambla-Barreno.
\newblock A remark about the {O}rlicz-{P}ettis theorem and the statistical
  convergence.
\newblock {\em Acta Math. Sin. (Engl. Ser.)}, 26(2):305--310, 2010.

\bibitem{a2}
A.~Aizpuru, M.~Nicasio-Llach, and A.~Sala.
\newblock A remark about the statistical {C}es\`aro summability and the
  {O}rlicz-{P}ettis theorem.
\newblock {\em Acta Math. Hungar.}, 126(1-2):94--98, 2010.

\bibitem{u1}
A.~Aizpuru, C.~P\'{e}rez-Eslava, and J.~B. Seoane-Sep\'{u}lveda.
\newblock Matrix summability methods and weakly unconditionally {C}auchy
  series.
\newblock {\em Rocky Mountain J. Math.}, 39(2):367--380, 2009.

\bibitem{aizpurujavier}
A.~Aizpuru and J.~P\'{e}rez-Fern\'{a}ndez.
\newblock Spaces of {$S$}-bounded multiplier convergent series.
\newblock {\em Acta Math. Hungar.}, 87(1-2):135--146, 2000.

\bibitem{aizpurupachecoeslava}
Antonio Aizpuru, Francisco~J. Garc\'{\i}a-Pacheco, and Consuelo
  P\'{e}rez-Eslava.
\newblock Matrix summability and uniform convergence of series.
\newblock {\em Proc. Amer. Math. Soc.}, 135(11):3571--3579, 2007.

\bibitem{korov}
H\"{u}seyin Aktu\u{g}lu.
\newblock Korovkin type approximation theorems proved via
  {$\alpha\beta$}-statistical convergence.
\newblock {\em J. Comput. Appl. Math.}, 259(part A):174--181, 2014.

\bibitem{b3}
Abdullah Alotaibi and M.~Mursaleen.
\newblock Korovkin type approximation theorems via lacunary equistatistical
  convergence.
\newblock {\em Filomat}, 30(13):3641--3647, 2016.

\bibitem{Z}
Bilal Altay and Ramazan Kama.
\newblock On {C}es\`aro summability of vector valued multiplier spaces and
  operator valued series.
\newblock {\em Positivity}, 22(2):575--586, 2018.

\bibitem{antosikmatrix}
Piotr Antosik and Charles Swartz.
\newblock {\em Matrix methods in analysis}, volume 1113 of {\em Lecture Notes
  in Mathematics}.
\newblock Springer-Verlag, Berlin, 1985.

\bibitem{X}
Oscar Blasco and Lech Drewnowski.
\newblock Extension of {P}ettis integration: {P}ettis operators and their
  integrals.
\newblock {\em Collect. Math.}, 70(2):267--281, 2019.

\bibitem{a3}
Aihong Chen and Ronglu Li.
\newblock A version of {O}rlicz-{P}ettis theorem for quasi-homogeneous operator
  space.
\newblock {\em J. Math. Anal. Appl.}, 373(1):127--133, 2011.

\bibitem{Y}
Fatih Deringoz, Vagif~S. Guliyev, and Maria~Alessandra Ragusa.
\newblock Intrinsic square functions on vanishing generalized {O}rlicz-{M}orrey
  spaces.
\newblock {\em Set-Valued Var. Anal.}, 25(4):807--828, 2017.

\bibitem{diestel}
Joseph Diestel.
\newblock {\em Sequences and series in {B}anach spaces}, volume~92 of {\em
  Graduate Texts in Mathematics}.
\newblock Springer-Verlag, New York, 1984.

\bibitem{dunford}
Nelson Dunford and Jacob~T. Schwartz.
\newblock {\em Linear operators. {P}art {I}}.
\newblock Wiley Classics Library. John Wiley \& Sons, Inc., New York, 1988.

\bibitem{EMN}
O.~H.~H. Edely, S.~A. Mohiuddine, and A.~K. Noman.
\newblock Korovkin type approximation theorems obtained through generalized
  statistical convergence.
\newblock {\em Appl. Math. Lett.}, 23:1382--1387, 2010.

\bibitem{orhan}
A.~D. Gadjiev and C.~Orhan.
\newblock Some approximation theorems via statistical convergence.
\newblock {\em Rocky Mountain J. Math.}, 32(1):129--138, 2002.

\bibitem{nuevo}
Manuel Gonz\'{a}lez, Fernando Le\'{o}n-Saavedra, and Mar\'{\i}a del Pilar
  Romero de~la Rosa.
\newblock On {$\ell_\infty$}-{G}rothendieck subspaces.
\newblock {\em J. Math. Anal. Appl.}, 497(1):124857, 5, 2021.

\bibitem{a4}
Juan Gu, Zhao~Sheng Liu, Yong~Fang Zhou, and Xiu~Yan Xu.
\newblock The {O}rlicz-{P}ettis theorem in abstract duality pair.
\newblock {\em Math. Pract. Theory}, 40(20):143--145, 2010.

\bibitem{kalton}
N.~J. Kalton.
\newblock The {O}rlicz-{P}ettis theorem.
\newblock In {\em Proceedings of the {C}onference on {I}ntegration, {T}opology,
  and {G}eometry in {L}inear {S}paces ({U}niv. {N}orth {C}arolina, {C}hapel
  {H}ill, {N}.{C}., 1979)}, volume~2 of {\em Contemp. Math.}, pages 91--100.
  Amer. Math. Soc., Providence, R.I., 1980.

\bibitem{ornuevo}
Fernando Le\'{o}n-Saavedra, Soledad Moreno-Pulido, and Antonio Sala.
\newblock Orlicz-{P}ettis type theorems via strong {$p$}-{C}es\`aro
  convergence.
\newblock {\em Numer. Funct. Anal. Optim.}, 40(7):798--802, 2019.

\bibitem{orlicz}
Fernando León-Saavedra, María del~Pilar Romero de~la Rosa, and Antonio Sala.
\newblock Orlicz–pettis theorem through summability methods.
\newblock {\em Mathematics}, 7(10), 2019.

\bibitem{schur}
Fernando León-Saavedra, María del~Pilar Romero de~la Rosa, and Antonio Sala.
\newblock Schur lemma and uniform convergence of series through convergence
  methods.
\newblock {\em Mathematics}, 8(10), 2020.

\bibitem{korovkin}
M.~del~Carmen List\'{a}n-Garc\'{\i}a and Mar\'{\i}a del~Pilar Romero de~la
  Rosa.
\newblock Korovkin-type approximation results through summability methods.
\newblock {\em Submitted}, 2021.

\bibitem{MA}
S.~A. Mohiuddine and B.~A.~S. Alamri.
\newblock Generalization of equi-statistical convergence via weighted lacunary
  sequence with associated korovkin and voronovskaya type approximation
  theorems.
\newblock {\em Rev. R. Acad. Cienc. Exactas Fis. Nat. Ser. A Math. RACSAM},
  113:1955--1973, 2019.

\bibitem{mursaleenbook}
M.~Mursaleen.
\newblock {\em Applied summability methods}.
\newblock SpringerBriefs in Mathematics. Springer, Cham, 2014.

\bibitem{MVEG}
M.~Mursaleen, V.~Karakaya, and F.~Ertürk, M.~andGürsoy.
\newblock Weighted statistical convergence and its application to korovkin type
  approximation theorem.
\newblock {\em Appl. Math. Comput}, 218:9132--9137, 2012.

\bibitem{T}
Maria~Alessandra Ragusa and Veli Shakhmurov.
\newblock Embedding of vector-valued {M}orrey spaces and separable differential
  operators.
\newblock {\em Bull. Math. Sci.}, 9(2):1950013, 23, 2019.

\bibitem{swartzmath}
Charles Swartz.
\newblock The {S}chur lemma for bounded multiplier convergent series.
\newblock {\em Math. Ann.}, 263(3):283--288, 1983.

\bibitem{booksw}
Charles Swartz.
\newblock {\em Multiplier convergent series}.
\newblock World Scientific Publishing Co. Pte. Ltd., Hackensack, NJ, 2009.

\bibitem{a5}
Charles Swartz.
\newblock A bilinear {O}rlicz-{P}ettis theorem.
\newblock {\em J. Math. Anal. Appl.}, 365(1):332--337, 2010.

\bibitem{a6}
Charles Swartz.
\newblock A weak {O}rlicz-{P}ettis theorem.
\newblock {\em Bol. Soc. Mat. Mexicana (3)}, 19(1):85--89, 2013.

\bibitem{a7}
Charles Swartz.
\newblock The {O}rlicz-{P}ettis theorem for multiplier convergent series.
\newblock In {\em Advanced courses of mathematical analysis {V}}, pages
  295--306. World Sci. Publ., Hackensack, NJ, 2016.

\bibitem{zygmund2}
A.~Zygmund.
\newblock {\em Trigonometrical Series}.
\newblock Monogr. Math. 1935.

\end{thebibliography}
\end{document}